\newcommand{\ind}{\mathds{1}}
\newcommand{\N}{\mathbb{N}}
\newcommand{\R}{\mathbb{R}}
\newcommand{\Var}{\text{Var}}
\newcommand{\Cov}{\text{Cov}}
\newcommand{\E}{\mathbb{E}}
\newcommand{\Prob}{\mathbb{P}}
\newcommand{\Sd}{\mathbb{S}^{d-1}}
\newdefinition{dfn}{Definition}
\newtheorem{thm}{Theorem}
\newtheorem{lem}[thm]{Lemma}
\newtheorem{cor}[thm]{Corollary}
\newdefinition{remark}{Remark}
\newdefinition{example}{Example}
\newproof{pf}{Proof}
\newproof{pot}{Proof of Theorem \ref{thm2}}
\journal{Journal of Multivariate Analysis}
\begin{document}

\begin{frontmatter}



\title{Extrapolation of stable random fields\tnoteref{grant}}
\tnotetext[grant]{This research was partially supported by the DFG -- RFFI  09--01--91331 grant, also the second author is supported by the Chebyshev Laboratory (Department of Mathematics and Mechanics, St.-Petersburg State University) under RF government grant 11.G34.31.0026}


\author[ulm]{Wolfgang Karcher}
\ead{wolfgang.karcher@uni-ulm.de}

\author[stp]{Elena Shmileva}
\ead{elena.shmileva@gmail.com}

\author[ulm]{Evgeny Spodarev}
\ead{evgeny.spodarev@uni-ulm.de}

\address[ulm]{Ulm University, Institute of Stochastics, Helmholtzstr. 18, 89081 Ulm, Germany}
\address[stp]{St. Petersburg State University, Chebyshev Laboratory, 14th Line 29b, St. Petersburg 199178, Russia}

\begin{abstract}
In this paper, we discuss three extrapolation methods for $\alpha$-stable random fields with $\alpha \in (1,2]$. We justify them, giving proofs of the existence and uniqueness of the solutions for each method and providing sufficient conditions for path continuity. Two methods are based on minimizing the variability of the difference between the predictor and the theoretical value, whereas in the third approach we provide a new method that maximizes the covariation between these two quantities.
\end{abstract}

\begin{keyword}
Extrapolation \sep random field \sep stable distribution
\MSC 60G60 \sep 60G25 \sep 62M20
\end{keyword}

\end{frontmatter}


\section{Introduction}

In many applications, Gaussian random fields are chosen as a model for the regionalized variables. However, natural phenomena may exhibit heavy tails such that the assumption of Gaussianity is not reasonable any more. In this case, stable random fields may be a more appropriate choice as they still possess the desirable properties of Gaussian random fields, but allow for heavy tails.

In this paper, we are dealing with the problem of extrapolating an $\alpha$-stable random field. Namely, based on a given set of data points $\{x(t_i)\}_{i=1}^n$ at the locations $t_1,\ldots,t_n$, $n \in \N$, which are part of a realization of an $\alpha$-stable random field, we estimate the unknown value $x(t_0)$ at the location $t_0 \notin \{t_1,\ldots,t_n\}$. We present three linear predictors of the form
\begin{equation}
 \widehat{x(t_0)} = \sum\limits_{i=1}^n \lambda_i x(t_i),\ \ t_0\in \R^d. \label{eq:linear_estimator}
\end{equation}
The weights $\lambda_1,\ldots,\lambda_n \in \R$ in (\ref{eq:linear_estimator}) will be chosen in such a way that the predictor $\widehat{x(t_0)}$ has certain desirable properties. Two predictors are based on ideas of previous work in the literature, namely we generalize the existing methods to a wide range of $\alpha$-stable random fields and investigate the properties of existence, uniqueness, exactness, unbiasedness and continuity of the predictors in detail. We also propose a third new extrapolation method which maximizes the covariation dependence measure between the estimated and theoretical value.

A $2$-stable random field is a Gaussian random field. A widely used extrapolation technique for Gaussian random fields is kriging which was invented by Krige in 1951, see \cite{Kri51}. The idea behind kriging is to find a linear predictor which minimizes the $L^2$-distance between the predictor and the theoretical value. We refer to \cite{Wac98} for further details about kriging. Since the second moment of an $\alpha$-stable random variable does not exist when $\alpha<2$, the kriging technique cannot be applied to $\alpha$-stable random fields so that one has to look for alternative extrapolation methods.

There is a series of papers in the literature dealing with the extrapolation problem of particular stable random processes and fields. In \cite{Pai98}, maximum likelihood estimation and conditional simulation of the fractional stable motion is used to determine the optimal value at an unobserved location. Linear prediction of discrete $\alpha$-stable processes based on minimizing the dispersion between the predictor and the theoretical value is discussed in \cite{BC85,BM98,CS84,GMO00,Hil00,Kok96}. Finally, a modified dispersion measure is minimized in \cite{MM09} in order to obtain a linear prediction of discrete $\alpha$-stable processes with integral representation.

The paper is organized as follows. In Section \ref{sec:preliminaries}, we provide some basics about $\alpha$-stable random fields. In Section \ref{sec:methods}, we present the three extrapolation methods for $\alpha \in (1,2]$ (least scale linear predictor, covariation orthogonal linear predictor and maximum of covariation predictor) and analyze their properties. The proofs of the results can be found in the Appendix.

\section{Preliminaries}\label{sec:preliminaries}

In this section, we provide some basic definitions and results for $\alpha$-stable random variables, random vectors and random fields. For a detailed introduction to $\alpha$-stable distributions and processes, we refer to \cite{ST94}.

\begin{dfn}[$\alpha$-stable random variable]
 Let $0 < \alpha \leq 2$, $\sigma \geq 0$, $-1 \leq \beta \leq 1$, and $\mu \in \R$. A random variable $X$ is said to have an \textit{$\alpha$-stable distribution} with parameters $\sigma$, $\beta$, and $\mu$, if its characteristic function $\varphi_X$ is given by
 \begin{eqnarray*}
  \varphi_X(\theta) = \begin{cases}
                      \exp\left\{-\sigma^\alpha |\theta|^\alpha \left(1-i\beta(\text{sign}(\theta))\tan \frac{\pi \alpha}{2}\right) + i \mu\theta\right\}, & \alpha \neq 1, \\
		      \exp\left\{-\sigma |\theta| \left(1+i\beta \frac{2}{\pi}(\text{sign}(\theta))\ln|\theta|\right) + i \mu\theta\right\}, & \alpha = 1.
                     \end{cases}
 \end{eqnarray*}
\end{dfn}
We briefly write $X \sim S_\alpha(\sigma,\beta,\mu)$. The parameters $\sigma$, $\beta$ and $\mu$ determine the scale, the skewness and the shift of the random variable, respectively, whereas the parameter $\alpha$ specifies the tail behavior of the distribution of $X$. We notice that $S_2(\sigma,0,\mu) = \mathcal{N}(\mu,2\sigma^2)$, where $\mathcal{N}(\mu,2\sigma^2)$ denotes the normal distribution with mean $\mu$ and variance $2\sigma^2$.

If $X\sim S_{\alpha}(\sigma,\beta, \mu)$ with $\alpha\in (0,2)$, then
\begin{eqnarray}
&&\E |X|^p<\infty \text{\quad for any\quad} p\in (0,\alpha), \\
&&\E |X|^p=\infty \text{\quad for any\quad} p\in [\alpha,\infty). \label{eq:mean_infinity}
\end{eqnarray}
Moreover if $\alpha \in (0,2)$ and $\beta=0$ in the case $\alpha=1$, then for every $p\in (0,\alpha)$ there exists a constant $c_{\alpha,\beta}(p)>0$ such that
\begin{equation}
\left(\E\vert X \vert^p\right)^{1/p} = c_{\alpha,\beta}(p) \sigma.\label{eq:p_mean}
\end{equation}

An $\alpha$-stable random vector is defined as follows. Let $d \in \N$ and $\langle \cdot,\cdot \rangle$ denote the Euclidean scalar product in  $\R^d$.

\begin{dfn}[$\alpha$-stable random vector]
 Let $0 < \alpha < 2$. We say that a random vector $\boldsymbol{X} = (X_1,\ldots,X_d)^\mathsf{T}$ in $\R^d$ has a \textit{(multivariate) $\alpha$-stable distribution} if there exists a finite measure $\Gamma$ on the unit sphere $\Sd$ of $\R^d$ and a vector $\boldsymbol{\mu}$ in $\R^d$ such that the characteristic function $\varphi_{\boldsymbol{X}}$ is given by
\begin{eqnarray*}
 \varphi_{\boldsymbol{X}}(\boldsymbol{\theta}) = \begin{cases}
                    \exp\left\{-\int_{\Sd} |\langle\boldsymbol{\theta},\boldsymbol{s}\rangle|^\alpha\left(1-i\text{sign}(\langle\boldsymbol{\theta},\boldsymbol{s}\rangle)\tan \frac{\pi\alpha}{2}\right)\Gamma(d\boldsymbol{s}) + i\langle\boldsymbol{\theta},\boldsymbol{\mu}\rangle\right\}, &\alpha \neq 1,\\
		    \exp\left\{-\int_{\Sd} |\langle\boldsymbol{\theta},\boldsymbol{s}\rangle|\left(1+i\frac{2}{\pi}\text{sign}(\langle\boldsymbol{\theta},\boldsymbol{s}\rangle)\ln|\langle\boldsymbol{\theta},\boldsymbol{s}\rangle|\right)\Gamma(d\boldsymbol{s}) + i\langle\boldsymbol{\theta},\boldsymbol{\mu}\rangle\right\}, &\alpha = 1.
                   \end{cases}
\end{eqnarray*}
\end{dfn}
The pair $(\Gamma,\boldsymbol{\mu})$ is unique. The \textit{spectral measure} $\Gamma$ on the unit sphere $\Sd$ contains most information about $\boldsymbol{X}$ such as the dependence structure and distributional properties of its components. The vector $\boldsymbol{\mu}$ determines the shift with respect to the origin. We say that $\Gamma$ is \textit{concentrated on a great sub-sphere} of $\Sd$ if it is concentrated on the intersection of $\Sd$ with a $(d-1)$-dimensional linear subspace. 
If $\Gamma$ is not concentrated on a great sub-sphere of $\Sd$, then $\boldsymbol{X}$ is called \textit{full-dimensional}. Otherwise, $\boldsymbol{X}$ is called \textit{singular}.

\begin{lem}[Linear dependence]\label{lemma:lin_dep}
Let $d \in \N$ and consider a $d$-dimensional $\alpha$-stable random vector $\boldsymbol{X}=(X_1,\ldots,X_d)^\mathsf{T}$. Let $\boldsymbol{\mu}=(0,\ldots,0)^\mathsf{T} \in \R^d$. $\boldsymbol{X}$ is singular if and only if $\sum_{i=1}^d c_i X_i = 0$ almost surely for some $(c_1,\ldots,c_d)^\mathsf{T} \in \R^d \setminus \{0\}$.
\end{lem}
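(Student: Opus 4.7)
My plan is to translate the singularity/linear dependence equivalence into a statement about the characteristic function $\varphi_{\boldsymbol{X}}$ evaluated along the ray $\{t\boldsymbol{c}:t\in\R\}$. The key observation is that for any $\boldsymbol{c}\in\R^d$, the random variable $Y := \sum_{i=1}^d c_i X_i = \langle \boldsymbol{c},\boldsymbol{X}\rangle$ has characteristic function $\varphi_Y(t)=\varphi_{\boldsymbol{X}}(t\boldsymbol{c})$, and $Y=0$ almost surely if and only if $\varphi_Y(t)=1$ for every $t\in\R$. Since $\boldsymbol{\mu}=\boldsymbol{0}$, the term $i\langle t\boldsymbol{c},\boldsymbol{\mu}\rangle$ drops out of the characteristic function, so the condition $\varphi_{\boldsymbol{X}}(t\boldsymbol{c})=1$ reduces to the vanishing of an integral against $\Gamma$.

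For the forward direction, assume $\Gamma$ is concentrated on a great sub-sphere $\mathbb{S}^{d-1}\cap V$ where $V$ is a $(d-1)$-dimensional linear subspace, and let $\boldsymbol{c}\in\R^d\setminus\{0\}$ be a unit vector orthogonal to $V$. Then $\langle \boldsymbol{c},\boldsymbol{s}\rangle=0$ for $\Gamma$-almost every $\boldsymbol{s}$, and substituting $\boldsymbol{\theta}=t\boldsymbol{c}$ into the characteristic function (in both cases $\alpha\ne1$ and $\alpha=1$) makes the entire integrand identically zero. Hence $\varphi_{\boldsymbol{X}}(t\boldsymbol{c})=1$ for all $t$, which gives $\langle \boldsymbol{c},\boldsymbol{X}\rangle=0$ almost surely.

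For the converse, suppose $\sum c_i X_i=0$ a.s.\ for some $\boldsymbol{c}\ne\boldsymbol{0}$. Then $\varphi_{\boldsymbol{X}}(t\boldsymbol{c})=1$, so in particular $|\varphi_{\boldsymbol{X}}(t\boldsymbol{c})|=1$, which forces the real part of the exponent to vanish. In the case $\alpha\ne 1$ this real part equals $-\int_{\mathbb{S}^{d-1}}|t|^\alpha |\langle \boldsymbol{c},\boldsymbol{s}\rangle|^\alpha\,\Gamma(d\boldsymbol{s})$, while in the case $\alpha=1$ it equals $-\int_{\mathbb{S}^{d-1}}|t|\,|\langle \boldsymbol{c},\boldsymbol{s}\rangle|\,\Gamma(d\boldsymbol{s})$; in either case, taking any $t\ne0$ and using non-negativity of the integrand gives $\langle \boldsymbol{c},\boldsymbol{s}\rangle=0$ for $\Gamma$-almost every $\boldsymbol{s}$. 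Thus $\Gamma$ is concentrated on $\{\boldsymbol{s}\in\mathbb{S}^{d-1}:\langle \boldsymbol{c},\boldsymbol{s}\rangle=0\}$, which is precisely the intersection of $\mathbb{S}^{d-1}$ with the $(d-1)$-dimensional hyperplane $\boldsymbol{c}^\perp$, i.e.\ a great sub-sphere; hence $\boldsymbol{X}$ is singular.

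The only mildly delicate point is that the imaginary part of the exponent (with the $\tan(\pi\alpha/2)$ factor or the $\ln|\langle\boldsymbol{\theta},\boldsymbol{s}\rangle|$ correction for $\alpha=1$) need not vanish on its own; this is why I would explicitly work with the modulus $|\varphi_{\boldsymbol{X}}(t\boldsymbol{c})|=1$ rather than the full identity, which isolates the real part and yields the clean conclusion $\int|\langle\boldsymbol{c},\boldsymbol{s}\rangle|^{\alpha}\Gamma(d\boldsymbol{s})=0$ regardless of $\alpha$.
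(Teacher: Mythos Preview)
Your proof is correct and takes essentially the same approach as the paper: both directions reduce to showing that $\int_{\mathbb{S}^{d-1}}|\langle\boldsymbol{c},\boldsymbol{s}\rangle|^{\alpha}\,\Gamma(d\boldsymbol{s})=0$ is equivalent to $\langle\boldsymbol{c},\boldsymbol{X}\rangle=0$ a.s. The only cosmetic difference is that the paper invokes the known scale formula $\sigma_{\sum c_iX_i}^{\alpha}=\int_{\mathbb{S}^{d-1}}|\sum c_is_i|^{\alpha}\,\Gamma(d\boldsymbol{s})$ from \cite[Example~2.3.4]{ST94}, whereas you derive the same vanishing condition directly by taking the modulus of the characteristic function; this makes your version slightly more self-contained (and explicitly covers $\alpha=1$), but the underlying computation is identical.
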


\begin{dfn}[Linear regression]
Let $n \in \N$ and $(X_1,\ldots,X_n,X_{n+1})^\mathsf{T}$ be an $\alpha$-stable random vector. The regression of $X_{n+1}$ on $(X_1,\ldots,X_n)^\mathsf{T}$ is called \textit{linear} if there exist some constants $c_1,\ldots,c_n \in \R$ such that
\begin{equation}
\E(X_{n+1}|X_1,\ldots,X_n) = \sum_{i=1}^n c_i X_i \quad a.s. \label{eq:linear_regression}
\end{equation}
\end{dfn}

Let $\text{sp}(\boldsymbol{X})$ denote the linear span of the $\alpha$-stable random vector $\boldsymbol{X}$.
\begin{dfn}[Multiple regression property]
Let $n \in \N$. The vector $\boldsymbol{X}$ has the \textit{multiple regression property} if for all random variables $Y_1,\ldots,Y_{n+1} \in \text{sp}(\boldsymbol{X})$
$$\E(Y_{n+1}|Y_1,\ldots,Y_{n}) \in \text{sp}\left((Y_1,\ldots,Y_n)^\mathsf{T}\right).$$
\end{dfn}


A random vector $\boldsymbol{X}$ in $\R^d$ is called \textit{symmetric} if $\Prob(\boldsymbol{X} \in A) = \Prob(-\boldsymbol{X} \in A)$ for any Borel set $A\in \R^d$. If $\boldsymbol{X}$ is a symmetric $\alpha$-stable random vector in $\R^d$, its characteristic function $\varphi_{\boldsymbol{X}}$ is given by
$$\varphi_{\boldsymbol{X}}(\boldsymbol{\theta}) =
      \exp\left\{-\int_{\Sd} |\langle\boldsymbol{\theta},\boldsymbol{s}\rangle|^\alpha \Gamma(d\boldsymbol{s})\right\}, \quad \boldsymbol{\theta} \in \R^d.$$
For symmetric $\alpha$-stable distributions, we use the standard abbreviation  $S\alpha S$.


The dependence of two $\alpha$-stable random variables cannot be studied by using the covariance because of the absence of the second moments if $\alpha<2$. Thus, we have to use a different dependence measure. The most popular dependence measure between two $\alpha$-stable random variables is the covariation. It is only defined for $\alpha$-stable random variables with $1 < \alpha \leq 2$. Let $a^{<p>} := \vert a \vert^p \text{sign}(a)$ denote the signed power for $a\in \R$ and $p\in \R$.

\begin{dfn}[Covariation]\label{def:covariation1}
Let $\alpha \in (1,2]$ and $\Gamma$ be the spectral measure of the $\alpha$-stable random vector $\boldsymbol{X}=(X_1,X_2)^\mathsf{T}$. The \textit{covariation} of $X_1$ on $X_2$ is the real number
$$\left[X_1,X_2\right]_\alpha :=  \int_{\mathbb{S}^1} s_1 s_2^{<\alpha-1>} \Gamma(ds_1,ds_2).$$
\end{dfn}
The covariation is linear in the first entry, but in general not in the second. It is clear that $[X_1,X_2]_\alpha = 0$ if $X_1$ and $X_2$ are independent. For $\alpha=2$, we have $\left[X_1,X_2\right]_2 = 1/2 \cdot \Cov(X_1,X_2)$, where $\Cov(X_1,X_2)$ denotes the covariance between $X_1$ and $X_2$, see~\cite[Example~2.7.2]{ST94}.


Let $(\Omega,\mathcal{F},\Prob)$ be the underlying probability space and $L_0(\Omega)$ be the set of all real random variables defined on it. Furthermore, let $(E,\mathcal{E},m)$ be an arbitrary measurable space, $\beta: E \to [-1,1]$ be a measurable function and $\mathcal{E}_0 := \{A \in \mathcal{E}: m(A) < \infty\}$.
\begin{dfn}[$\alpha$-stable random measure]
An independently scattered $\sigma$-additive set function $M: \mathcal{E}_0 \to L_0(\Omega)$ such that for each $A \in \mathcal{E}_0$
 $$M(A) \sim S_\alpha\left((m(A))^{1/\alpha}, \frac{\int_A\beta(x)m(dx)}{m(A)}, 0\right)$$
 is called \textit{$\alpha$-stable random measure} on $(E, \mathcal{E})$ with control measure $m$ and skewness  $\beta$.
\end{dfn}
\textit{Independent scatteredness} means that for any collection $A_1,A_2, \ldots, A_n$, $n \in \N$, of disjoint sets belonging to $\mathcal{E}_0$, the random variables $M(A_1),\ldots,M(A_n)$ are independent, whereas \textit{$\sigma$-additivity} means that if $\bigcup_{j=1}^\infty A_j \in \mathcal{E}_0$ for a sequence of disjoint sets $A_1,A_2,\ldots \in \mathcal{E}_0$, then $M(\bigcup_{j=1}^\infty A_j) = \sum_{j=1}^\infty M(A_j)$ almost surely.

A collection $X=\{X(t), t \in \R^d\}$ of $\alpha$-stable random variables is called \textit{$\alpha$-stable random field}. We consider random fields of the form
\begin{equation}
 X(t) = \int_E f_t(x) M(dx), \quad t \in \R^d, \quad d \in \N, \label{eq:integral_representation}
\end{equation}
where $f_t:E \to \R$ are measurable functions such that $\int_E \vert f_t(x) \vert^\alpha m(dx) < \infty$ and in the case $\alpha=1$ additionally $\int_E \vert f(x) \beta(x) \ln \vert f(x)\vert \vert m(dx) < \infty$. The stochastic integral in (\ref{eq:integral_representation}) is defined in the natural way by approximating the functions $f_t$ by simple functions. We notice that the random field $X$ in (\ref{eq:integral_representation}) is $\alpha$-stable because its finite-dimensional distributions are $\alpha$-stable. In particular, the scale parameter $\sigma_{X(t)}$ of $X(t)$ is given by
\begin{equation}
\sigma_{X(t)}^\alpha = \int_E \vert f_t(x) \vert^\alpha m(dx). \label{eq:integral_scale}
\end{equation}
If $\beta(x)=0$ for all $x \in E$, then the corresponding  $\alpha$-stable random measure has $S\alpha S$ values and is called \textit{$S\alpha S$ measure}. In this case, the finite-dimensional distributions of $X$ are symmetric $\alpha$-stable, so we have that $X$ is an $S\alpha S$ random field.

Random fields with integral representation (\ref{eq:integral_representation}) comprise a rich subclass of $\alpha$-stable random fields, namely random fields which are separable in probability, see \cite[Chapter~13]{ST94}. In particular, all stochastically continuous $\alpha$-stable random fields are separable in probability.

Let $X_j = \int_E f_j(x) M(dx)$, $j=1,2,\ldots$, and $X=\int_E f(x) M(dx)$. Then for $\alpha \neq 1$, it can be shown that
\begin{equation}
\underset{j \to \infty}{\text{plim}} X_j = X \quad \Leftrightarrow \quad
\lim\limits_{j \to \infty} \sigma_{X_j-X} = 0, \label{eq:stochastic_convergence}
\end{equation}
where ${\text{plim}}$ denotes convergence in probability, cf.~\cite[Proposition 3.5.1]{ST94}.

In \cite[Theorem~1 and Remark~1]{CW92}, it is shown that for $t_0,t_1,\ldots,t_n \in \R^d$ it holds
$$\E(X(t_0)|X(t_1),\ldots,X(t_n)) = \sum_{i=1}^n c_i X(t_i)$$
if and only if for all $t \in \R^{n}$
\begin{eqnarray}
 \int_{\mathbb{S}^n} \left(v - \langle c,u\rangle\right)\langle t,u\rangle^{<\alpha-1>} \Gamma(dv,du) = 0 \label{eq:condition1},
\end{eqnarray}
where $\Gamma$ is the spectral measure of the random vector $(X(t_0), X(t_1),\ldots,X(t_n))^\mathsf{T}$. If $X$ is a moving average process, another criterion based on kernel functions for the regression of $X(t_0)$ on $(X(t_1),\ldots,X(t_n))^\mathsf{T}$ to be linear is given in \cite[Section~4.4.]{CW92}.

\begin{lem}\label{lemma:great_subsphere}
Consider a $d$-dimensional $\alpha$-stable random vector $\boldsymbol{X}=(X_1,\ldots,X_d)^\mathsf{T}$ with integral representation
$$\boldsymbol{X} = \left(\int_E f_1(x) M(dx),\ldots,\int_E f_d(x) M(dx)\right)^\mathsf{T}.$$
Then $\boldsymbol{X}$ is singular if and only if $\sum_{i=1}^d c_i f_i(x) = 0$ $m$-almost everywhere for some vector $(c_1,\ldots,c_d)^\mathsf{T} \in \R^d \setminus \{0\}$.
\end{lem}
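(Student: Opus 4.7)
The plan is to reduce the claim to Lemma~\ref{lemma:lin_dep} by observing that a linear combination of integrals with respect to $M$ is itself an integral with respect to $M$, and then to read off linear dependence from the scale formula~\eqref{eq:integral_scale}.

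First I would use the linearity of the stochastic integral with respect to an $\alpha$-stable random measure to write, for any $(c_1,\ldots,c_d)^{\mathsf T} \in \R^d$,
\begin{equation*}
\sum_{i=1}^d c_i X_i \;=\; \int_E \Bigl(\sum_{i=1}^d c_i f_i(x)\Bigr)\,M(dx),
\end{equation*}
so that this random variable is $\alpha$-stable with scale parameter
\begin{equation*}
\sigma_{\sum_i c_i X_i}^{\alpha} \;=\; \int_E \Bigl|\sum_{i=1}^d c_i f_i(x)\Bigr|^{\alpha}\,m(dx),
\end{equation*}
by~\eqref{eq:integral_scale}. Since Lemma~\ref{lemma:lin_dep} is stated under the assumption $\boldsymbol{\mu}=0$, the shift of $\sum_i c_i X_i$ is zero as well, so this random variable equals $0$ almost surely if and only if its scale vanishes.

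For the ``$\Rightarrow$'' direction, assume $\boldsymbol{X}$ is singular. By Lemma~\ref{lemma:lin_dep} there exists $(c_1,\ldots,c_d)\neq 0$ with $\sum_i c_i X_i = 0$ a.s.; the scale then vanishes, and the display above forces $\sum_i c_i f_i = 0$ $m$-a.e. (This is also what~\eqref{eq:stochastic_convergence} says when the approximating sequence is constant and the limit is $0$, handling the $\alpha \neq 1$ case directly; in the $\alpha=1$, $\beta\equiv 0$ case one uses~\eqref{eq:p_mean}, and the skewed $\alpha=1$ case is outside the scope of the later extrapolation results.) Conversely, if $\sum_i c_i f_i = 0$ $m$-a.e. for some nonzero $(c_i)$, then the integrand above is $m$-a.e.\ zero, the scale is zero, and hence $\sum_i c_i X_i = 0$ a.s., so $\boldsymbol{X}$ is singular by the other direction of Lemma~\ref{lemma:lin_dep}.

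The main ``obstacle'' is mostly bookkeeping: justifying that the scale of an $\alpha$-stable variable determines whether it is a.s.\ zero (once the shift is fixed at $0$) and handling the $\alpha=1$ skewed case, which requires checking that the logarithmic integrability condition in~\eqref{eq:integral_representation} is preserved under linear combinations; this follows from $\sum_i c_i f_i \equiv 0$ trivially in the ``$\Leftarrow$'' direction, and in the ``$\Rightarrow$'' direction it is automatic because $\sum_i c_i f_i$ is an $\R$-linear combination of the admissible integrands $f_i$. No new calculation is required beyond linearity of the integral and the scale identity~\eqref{eq:integral_scale}.
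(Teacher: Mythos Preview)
Your proposal is correct and follows exactly the same approach as the paper: the paper's proof is the one-line remark that Lemma~\ref{lemma:great_subsphere} follows from Lemma~\ref{lemma:lin_dep} together with the scale identity $\sigma_{\sum_i c_i X_i}^\alpha = \int_E |\sum_i c_i f_i(x)|^\alpha\,m(dx)$, which is precisely what you spell out. Your additional remarks on the $\alpha=1$ case are extra care that the paper does not make explicit.
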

Lemma~\ref{lemma:great_subsphere} follows from Lemma~\ref{lemma:lin_dep} and the fact that $\sigma_{\sum_{i=1}^d c_i X_i}^\alpha = \int_E \left\vert \sum_{i=1}^d c_i f_i(x) \right\vert^\alpha m(dx)$.

\begin{dfn}[Covariation function]
Let $1<\alpha\leq 2$ and $X$ be an $\alpha$-stable random field. The function $\kappa:\R^d\times \R^d \to \R$ defined by
 $$\kappa(s,t) = [X(s),X(t)]_\alpha, \quad s,t \in \R^d,$$
 is called the \textit{covariation function} of $X$.
\end{dfn}
For any $\alpha$-stable random field $X$ of the form (\ref{eq:integral_representation}) with $1<\alpha\leq 2$, its covariation function is given by 
 \begin{equation}
 \kappa(s,t) = \int_E f_{s}(x) f_{t}(x)^{<\alpha-1>} m(dx). \label{eq:covariation_kernel}
 \end{equation}

A random vector $(X,Y)^\mathsf{T}$ is called \textit{associated} if, for any functions $f,g:\R^2 \to \R$ which are non-decreasing in each argument, one has
\begin{equation}
\Cov(f(X,Y),g(X,Y)) \geq 0
\end{equation}
whenever the covariance exists. It is called \textit{negatively associated} if for any functions $f,g:\R \to \R$ which are non-decreasing, one has
\begin{equation*}
\Cov(f(X),g(Y)) \leq 0
\end{equation*}
whenever the covariance exists. Notice that the condition $\Cov(f(X,Y),g(X,Y)) \leq 0$ does not make sense because $\Cov(g(X,Y),g(X,Y)) \geq 0$ for any non-degenerate random vector $(X,Y)^\mathsf{T}$.

\begin{lem}[Association, kernel functions]\label{lemma:association_kernels}
Let $0<\alpha<2$ and $(X,Y)^\mathsf{T}$ be an $\alpha$-stable random vector with integral representation $(X,Y)^\mathsf{T} = \left(\int_E f_1(x)M(dx),\int_E f_2(x)M(dx)\right)^\mathsf{T}$. Then $(X,Y)^\mathsf{T}$ is associated (negatively associated) if and only if $f_1f_2\geq 0$ ($f_1f_2\leq 0$) $m$-almost everywhere.
\end{lem}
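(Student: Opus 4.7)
The plan is to exploit the decomposition of $E$ according to the sign pattern of $(f_1,f_2)$ together with the simple-function approximation for stable integrals. Modulo $m$-null sets, write $E = E_{++}\cup E_{+-}\cup E_{-+}\cup E_{--}$, where $E_{\epsilon_1\epsilon_2}$ is the region on which $f_1$ has sign $\epsilon_1$ and $f_2$ has sign $\epsilon_2$; points where some $f_i = 0$ contribute nothing to the corresponding integral and can be absorbed into any piece. By independent scatteredness of $M$, integrating over each piece yields four independent $\alpha$-stable vectors whose componentwise sum is $(X,Y)$.

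For the ``if'' direction of association, suppose $f_1 f_2 \geq 0$ a.e., so only $E_{++}$ and $E_{--}$ survive. On $E_{++}$, I approximate $f_1,f_2$ by non-negative simple functions $f_i^{(n)} = \sum_k a_{i,k}\ind_{A_k}$ with disjoint $A_k$ and $a_{i,k}\ge 0$; the resulting $(X^{(n)},Y^{(n)}) = (\sum_k a_{1,k}M(A_k), \sum_k a_{2,k}M(A_k))$ is a coordinatewise non-decreasing transformation of the independent random vector $(M(A_k))_k$. Independent random variables are trivially associated, and non-decreasing functions of associated vectors remain associated, so $(X^{(n)},Y^{(n)})$ is associated. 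The convergence in probability from~(\ref{eq:stochastic_convergence}), combined with the standard preservation of association under weak limits of bounded non-decreasing functionals, delivers association of the $E_{++}$-contribution. The $E_{--}$ case is identical after replacing $M$ by $-M$ (still $\alpha$-stable with flipped skewness). Finally, the sum of two independent associated vectors is associated, since the $4$-vector of their components is associated and coordinatewise addition is non-decreasing.

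For the ``if'' direction of negative association, the same scheme applies on $E_{+-}$ and $E_{-+}$, except that the simple-function approximation now presents $(X^{(n)},Y^{(n)})$ as $(\phi(\boldsymbol Z),\psi(\boldsymbol Z))$ with $\phi$ coordinatewise non-decreasing, $\psi$ coordinatewise non-increasing (or the reverse), for independent $\boldsymbol Z=(M(A_k))_k$. The Harris/FKG inequality for independent random variables, applied to $f\circ\phi$ and $-g\circ\psi$, yields $\Cov(f(X^{(n)}),g(Y^{(n)}))\leq 0$ for non-decreasing $f,g$; passing to the limit (with truncation to handle heavy tails) preserves the inequality, and summing independent negatively-associated contributions preserves negative association.

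For the ``only if'' direction I would argue contrapositively. If $m(\{f_1 f_2 < 0\})>0$, the formula linking the spectral measure $\Gamma$ of $(X,Y)$ to the kernels $(f_1,f_2)$ places positive mass of $\Gamma$ on the open quadrants $\{s\in\mathbb S^1:s_1 s_2<0\}$. Association would force positive quadrant dependence $\Prob(X>x, Y>y)\geq \Prob(X>x)\Prob(Y>y)$ for all $x,y$ (take $f(x,y)=\ind_{\{x>a\}}$, $g(x,y)=\ind_{\{y>b\}}$ in the definition), and the tail asymptotics of bivariate stable laws then show that spectral mass on these ``wrong'' quadrants violates PQD at sufficiently extreme $x,y$. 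The negative-association converse is analogous through the reverse inequality. This ``only if'' direction is the main obstacle, as it requires translating the joint distributional constraint back to a geometric one on the spectral measure; I would either carry out the tail-asymptotic argument directly or invoke the known characterisation of association of stable vectors through the support of their spectral measure.
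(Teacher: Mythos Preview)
Your argument for the ``if'' direction is correct and genuinely different from the paper's. The paper does not build association from scratch: it writes down the explicit formula (from \cite[pp.~115]{ST94}) expressing the spectral measure $\Gamma$ of $(X,Y)$ as a push-forward of $m$ under $x\mapsto (f_1(x),f_2(x))/\sqrt{f_1(x)^2+f_2(x)^2}$, and then invokes the ready-made characterisation \cite[Theorems~4.6.1, 4.6.3]{ST94} that $(X,Y)$ is associated (resp.\ negatively associated) if and only if $\Gamma$ charges no mass on $S^-=\{s\in\mathbb S^1:s_1s_2<0\}$ (resp.\ on $S^+$). With the push-forward formula in hand, $\Gamma(S^-)=0$ is immediately equivalent to $m(\{f_1f_2<0\})=0$, so both directions drop out in one stroke. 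Your route via sign decomposition, simple-function approximation and FKG monotonicity is self-contained and avoids citing the association characterisation, at the cost of being longer and of having to handle limits carefully (association passes to weak limits only through bounded continuous test functions, so the truncation step is genuinely needed).

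Where your proposal is weak is the ``only if'' direction. You offer two options: invoke the spectral-measure characterisation of association, or reprove it via PQD and tail asymptotics. The first is exactly what the paper does, and once you grant yourself that tool the entire approximation machinery for ``if'' becomes redundant. The second is not as light as you suggest: showing that any $\Gamma$-mass on $S^-$ forces a violation of PQD at some extreme $(x,y)$ is essentially the nontrivial half of \cite[Theorem~4.6.1]{ST94}, and a sketch of the form ``tail asymptotics then show\ldots'' does not discharge it. So as written, your proof either collapses to the paper's argument for the hard direction, or leaves a real gap.
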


\begin{cor}[Decomposition of a stable random vector]\label{corollary:decomposition}
Let $0<\alpha<2$ and $(X,Y)^\mathsf{T}$ be an $\alpha$-stable random vector. Then there exist $\alpha$-stable random variables $X_1,X_2,Y_1,Y_2$ such that
\begin{equation*}
X = X_1+X_2 \quad \text{and} \quad Y=Y_1+Y_2 \quad \text{a.s.,}
\end{equation*}
$(X_1,Y_1)^\mathsf{T}$ is associated, $(X_2,Y_2)^\mathsf{T}$ is negatively associated and the components of each of the random vectors $(X_1,X_2)^\mathsf{T}$, $(X_1,Y_2)^\mathsf{T}$, $(Y_1,X_2)^\mathsf{T}$, $(Y_1,Y_2)^\mathsf{T}$ are independent.
\end{cor}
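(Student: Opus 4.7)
The plan is to build the decomposition directly from an integral representation of $(X,Y)^\mathsf{T}$ by splitting the domain according to the sign of $f_1 f_2$.

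Every $\alpha$-stable random vector admits an integral representation against some $\alpha$-stable random measure $M$ on a measurable space $(E,\mathcal{E},m)$ (for instance, take $E = \Sd[][][1]$, $m = \Gamma$ and $f_i(s) = s_i$). So we may write $X = \int_E f_1(x)\,M(dx)$ and $Y = \int_E f_2(x)\,M(dx)$. Partition $E$ into the disjoint measurable sets
\begin{equation*}
E^+ = \{x \in E : f_1(x) f_2(x) \geq 0\}, \qquad E^- = \{x \in E : f_1(x) f_2(x) < 0\},
\end{equation*}
and define
\begin{equation*}
X_1 = \int_{E^+} f_1\,dM, \quad Y_1 = \int_{E^+} f_2\,dM, \quad X_2 = \int_{E^-} f_1\,dM, \quad Y_2 = \int_{E^-} f_2\,dM.
\end{equation*}
By linearity of the stochastic integral, $X = X_1 + X_2$ and $Y = Y_1 + Y_2$ almost surely.

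Next I would read off the (negative) association from Lemma~\ref{lemma:association_kernels}. The pair $(X_1,Y_1)^\mathsf{T}$ has integrands $f_1\mathbf{1}_{E^+}$ and $f_2\mathbf{1}_{E^+}$ whose product is $\geq 0$ everywhere on $E$, so $(X_1,Y_1)^\mathsf{T}$ is associated. Symmetrically, the integrands of $(X_2,Y_2)^\mathsf{T}$ have product $\leq 0$ on $E$, yielding negative association.

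The remaining point is the independence of the two vectors $(X_1,Y_1)^\mathsf{T}$ and $(X_2,Y_2)^\mathsf{T}$, which at once implies the four cross-pair independences. This is the only step that is not immediate: it is the place where independent scatteredness of $M$ must be invoked. I would approximate each $f_i\mathbf{1}_{E^\pm}$ by simple functions supported on $E^+$, respectively $E^-$, use that $M$ evaluated on disjoint sets produces independent stable variables, and pass to the limit via~\eqref{eq:stochastic_convergence} together with the fact that joint characteristic functions of independent vectors factorize and factorization is preserved under $\mathrm{plim}$. Once the two vectors $(X_1,Y_1)^\mathsf{T}$ and $(X_2,Y_2)^\mathsf{T}$ are shown to be independent, the independences claimed for $(X_1,X_2)^\mathsf{T}$, $(X_1,Y_2)^\mathsf{T}$, $(Y_1,X_2)^\mathsf{T}$ and $(Y_1,Y_2)^\mathsf{T}$ follow since each cross-pair takes one coordinate from each of the two independent vectors. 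The main (but mild) obstacle is precisely this limiting argument; everything else reduces to the partition construction and to a direct application of Lemma~\ref{lemma:association_kernels}.
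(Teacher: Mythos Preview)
Your construction is exactly the paper's: represent $(X,Y)^\mathsf{T}$ as stochastic integrals, split $E$ according to the sign of $f_1f_2$, and read off (negative) association from Lemma~\ref{lemma:association_kernels}. The only difference is the independence step: rather than running a simple-function limiting argument, the paper invokes directly the standard criterion (\cite[Theorem~3.5.3]{ST94}) that $\int_E g_1\,dM$ and $\int_E g_2\,dM$ are independent if and only if $g_1g_2=0$ $m$-a.e., which immediately gives all four cross-pair independences since the integrands live on the disjoint sets $E^+$ and $E^-$.
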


Due to Corollary~\ref{corollary:decomposition}, the dependence relation between two $\alpha$-stable random variables can be reduced to the association and negative association relations between their parts. The following corollary provides a justification for using the covariation as a dependence measure. It is a direct consequence of Lemma~\ref{lemma:association_kernels} and formula (\ref{eq:covariation_kernel}).

\begin{cor}[Association, covariation]\label{corollary:association}
Let $1<\alpha<2$ and $(X,Y)^\mathsf{T}$ be an $\alpha$-stable random vector. If $(X,Y)^\mathsf{T}$ is associated (negatively associated), then $[X,Y]_\alpha \geq 0$ ($[X,Y]_\alpha \leq 0$).
\end{cor}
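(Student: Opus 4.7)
The plan is to reduce the claim to Lemma \ref{lemma:association_kernels} using formula (\ref{eq:covariation_kernel}). First, I would invoke the standard fact that any $\alpha$-stable random vector $(X,Y)^\mathsf{T}$ with $0<\alpha<2$ admits an integral representation
\[
(X,Y)^\mathsf{T} = \left(\int_E f_1(x)\,M(dx),\int_E f_2(x)\,M(dx)\right)^\mathsf{T}
\]
with respect to some $\alpha$-stable random measure $M$ on a measurable space $(E,\mathcal{E},m)$ (for instance one may take $E=\mathbb{S}^1$ with $m=\Gamma$, the spectral measure of $(X,Y)^\mathsf{T}$, and $f_i(s)=s_i$). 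This puts us in the setting of Lemma \ref{lemma:association_kernels}.

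Next, assume $(X,Y)^\mathsf{T}$ is associated. Then by Lemma \ref{lemma:association_kernels} we have $f_1(x)f_2(x)\geq 0$ for $m$-almost all $x\in E$. By formula (\ref{eq:covariation_kernel}),
\[
[X,Y]_\alpha \;=\; \int_E f_1(x)\, f_2(x)^{<\alpha-1>}\,m(dx) \;=\; \int_E f_1(x)\,\mathrm{sign}(f_2(x))\,|f_2(x)|^{\alpha-1}\,m(dx).
\]
On $\{f_2=0\}$ the integrand is zero. On $\{f_2\neq 0\}$ the condition $f_1f_2\geq 0$ forces $\mathrm{sign}(f_1)=\mathrm{sign}(f_2)$ whenever $f_1\neq 0$, so $f_1(x)\,\mathrm{sign}(f_2(x))=|f_1(x)|\geq 0$; this remains trivially true when $f_1(x)=0$. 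Hence the integrand is nonnegative $m$-a.e.\ and $[X,Y]_\alpha\geq 0$.

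For the negatively associated case the same argument applies: Lemma \ref{lemma:association_kernels} yields $f_1f_2\leq 0$ $m$-a.e., which gives $f_1(x)\,\mathrm{sign}(f_2(x))\leq 0$ on $\{f_2\neq 0\}$, so the integrand is nonpositive and $[X,Y]_\alpha\leq 0$. There is no substantial obstacle here — the only mildly delicate point is the verification that the pointwise sign condition on the kernels transfers to the integrand $f_1\,f_2^{<\alpha-1>}$, which is immediate from the definition of the signed power. The reason the corollary is called a \emph{direct} consequence is precisely that Lemma \ref{lemma:association_kernels} does all of the probabilistic work, and (\ref{eq:covariation_kernel}) then converts an $m$-a.e.\ sign statement into a sign statement for the covariation.
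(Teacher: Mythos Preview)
Your argument is correct and is precisely the route the paper indicates: the corollary is stated there as ``a direct consequence of Lemma~\ref{lemma:association_kernels} and formula~(\ref{eq:covariation_kernel})'', and you have spelled out exactly that deduction. The only minor caveat is that your parenthetical choice $E=\mathbb{S}^1$, $m=\Gamma$, $f_i(s)=s_i$ is strictly valid only in the symmetric case; for the general statement one should appeal to the integral representation of \cite[Theorem~3.5.6]{ST94}, but this does not affect the substance of your proof.
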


The covariation is connected to the mixed moments by the following lemma.

\begin{lem}\label{lemma:mixed_moments}
Let $1<\alpha<2$ and suppose that $(X,Y)^\mathsf{T}$ is an $\alpha$-stable random vector with spectral measure $\Gamma$ such that $X \sim S_\alpha(\sigma_X, \beta_X,0)$ and $Y \sim S_\alpha(\sigma_Y,\beta_Y,0)$. For $1\leq p<\alpha$, it holds
\begin{equation}
\frac{\E\left(X Y^{<p-1>}\right)}{\E |Y|^p}=\frac{[X,Y]_{\alpha}(1-c\cdot \beta_Y)+ c\cdot (X,Y)_{\alpha}}{\sigma_Y^{\alpha}}, \label{eq:mixed_moments}
\end{equation}
where $(X,Y)_\alpha:= \int_{\mathbb{S}^1} s_1\vert s_2\vert^{\alpha-1}\Gamma(ds)$ and
\begin{equation*}
c:=c_{\alpha, p}(\beta_Y) = \frac{\tan (\alpha \pi/2)}{ 1+\beta_Y^2  \tan^2 (\alpha \pi/2 )} \left[ \beta_Y \tan(\alpha \pi/2)-\tan \left(\frac{p}{\alpha} \arctan(\beta_Y  \tan (\alpha \pi/2))\right)\right].
\end{equation*}
If $Y$ is symmetric, i.~e. $\beta_Y=0$, then $c=0$.
\end{lem}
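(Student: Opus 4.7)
The plan is to compute $\E(XY^{<p-1>})$ by Fourier inversion and then divide by $\E|Y|^p$. Differentiating in $y$ the well-known representation $|y|^p = C_p\int_\R |\theta|^{-p-1}(1-\cos(\theta y))\,d\theta$ (valid for $0<p<2$) gives
\[
y^{<p-1>} = K_p \int_\R \frac{\sin(\theta y)\,\text{sign}(\theta)}{|\theta|^p}\,d\theta
\]
with an explicit constant $K_p$, the integral being improper. Taking expectations with $y$ replaced by $Y$ and interchanging (justified by Fubini on truncations, since $\alpha>1$ ensures $\E|X|<\infty$) yields
\[
\E\bigl(XY^{<p-1>}\bigr) = K_p\int_\R \frac{\text{sign}(\theta)}{|\theta|^p}\,\text{Im}\,\E\bigl(Xe^{i\theta Y}\bigr)\,d\theta.
\]

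To compute $\E(Xe^{i\theta Y}) = -i\,\partial_\eta\varphi_{(X,Y)}(\eta,\theta)|_{\eta=0}$, one differentiates the exponent of the joint characteristic function at $\eta=0$. Since $\partial_\eta|\eta s_1+\theta s_2|^\alpha|_{\eta=0}=\alpha s_1|\theta s_2|^{\alpha-1}\text{sign}(\theta s_2)$ for $\alpha>1$, the two integrals against $\Gamma$ that appear are precisely $[X,Y]_\alpha$ (from the sign contribution) and $(X,Y)_\alpha$ (from the $\tan(\pi\alpha/2)$ skewness correction), giving
\[
\E\bigl(Xe^{i\theta Y}\bigr) = \alpha|\theta|^{\alpha-1}\bigl\{i\,\text{sign}(\theta)[X,Y]_\alpha + \tan(\pi\alpha/2)(X,Y)_\alpha\bigr\}\varphi_Y(\theta).
\]
Writing $\varphi_Y(\theta)=e^{-a|\theta|^\alpha}[\cos(ab|\theta|^\alpha)+i\,\text{sign}(\theta)\sin(ab|\theta|^\alpha)]$ with $a=\sigma_Y^\alpha$ and $b=\beta_Y\tan(\pi\alpha/2)$, taking the imaginary part and substituting $u=a|\theta|^\alpha$, the problem reduces to the two standard Gamma identities $\int_0^\infty u^{s-1}e^{-u}\cos(bu)\,du=\Gamma(s)\cos^s\phi\cos(s\phi)$ and $\int_0^\infty u^{s-1}e^{-u}\sin(bu)\,du=\Gamma(s)\cos^s\phi\sin(s\phi)$, with $s=1-p/\alpha$ and $\phi=\arctan b$. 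This produces $\E(XY^{<p-1>})=A[X,Y]_\alpha+B(X,Y)_\alpha$ with explicit constants satisfying $B/A=\tan(\pi\alpha/2)\tan((1-p/\alpha)\phi)$.

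Specialising to $X=Y$ avoids tracking all prefactors: the identities $[Y,Y]_\alpha=\sigma_Y^\alpha$, $(Y,Y)_\alpha=\sigma_Y^\alpha\beta_Y$ and $\E(YY^{<p-1>})=\E|Y|^p$ give $\E|Y|^p=(A+B\beta_Y)\sigma_Y^\alpha$, and dividing produces the stated ratio with $c=B/(A+B\beta_Y)$. The main remaining task is the trigonometric identification of $c$ with $c_{\alpha,p}(\beta_Y)$: writing $\tan((1-p/\alpha)\phi)=\tan(\phi-(p/\alpha)\phi)$ and applying the tangent subtraction formula, combined with $\tan\phi=\beta_Y\tan(\pi\alpha/2)$ and $\cos^2\phi=(1+\beta_Y^2\tan^2(\pi\alpha/2))^{-1}$, collapses $B/(A+B\beta_Y)$ to exactly the expression in the lemma, and in particular gives $c=0$ when $\beta_Y=0$. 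The chief technical delicacy is the Fubini justification in the first step, which for $p\le 1$ requires approximating $\text{sign}(\theta)/|\theta|^p$ by integrable truncations and passing to the limit by dominated convergence.
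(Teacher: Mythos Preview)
Your argument is correct and complete, but it takes a genuinely different route from the paper's. The paper proceeds by a perturbation trick: it writes $\E|\lambda X+Y|^p=\E|\xi|^p\,\sigma_{\lambda X+Y}^p$ with $\xi\sim S_\alpha(1,\beta_{\lambda X+Y},0)$, differentiates both sides in $\lambda$ at $\lambda=0$, and reads off the result using the closed-form expression for $\E|\xi|^p$ from \cite[p.~18]{ST94}. The left derivative immediately gives $p\,\E(XY^{<p-1>})$, while on the right the chain rule produces exactly the combination of $[X,Y]_\alpha$ and $(X,Y)_\alpha$ through $\partial_\lambda\sigma_{\lambda X+Y}^p$ and $\partial_\lambda\beta_{\lambda X+Y}$ at $\lambda=0$.

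Your Fourier-analytic approach is more self-contained: it does not appeal to the moment formula in \cite{ST94} but instead recomputes everything from the characteristic function via the integral representation of $y^{<p-1>}$ and the standard complex Gamma evaluations. The trick of specialising to $X=Y$ to eliminate the common prefactors is a nice way to bypass bookkeeping, and your trigonometric reduction of $c=B/(A+B\beta_Y)$ to the stated form is exactly right. The paper's method is shorter and more conceptual if one is willing to quote the $\E|\xi|^p$ formula; yours is longer but stands on its own and makes the analytic source of the constant $c$ more transparent. The only place your proof carries more technical weight is the Fubini/improper-integral justification you flag, which the paper avoids entirely by never writing down an oscillatory integral.
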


A random vector $\boldsymbol{X}$ is called \textit{sub-Gaussian} if $\boldsymbol{X}\stackrel{d}{=}A^{1/2} \boldsymbol{G}$, where $A \sim S_{\alpha/2}((\cos(\pi\alpha/4))^{2/\alpha},1,0)$ and $\boldsymbol{G}$ is a zero mean Gaussian vector independent of $A$. This multivariate distribution is a particular case of the Gaussian scale mixture and the elliptical distributions. Consider a stationary zero mean Gaussian random field $G=\{G(t), t \in \R^d\}$ with a positive definite covariance function $C$. Assume that the random variable $A$ is independent of $G$. The random field
$$X\stackrel{d}{=}\{A^{1/2}G(t), t \in \R^d\}$$
is called \textit{sub-Gaussian random field}. In \cite[Example~2.7.4]{ST94}, it is shown that for sub-Gaussian random fields, the covariation function is given by
\begin{equation}
\kappa(s,t) = 2^{-\alpha/2} C(s-t)C(0)^{(\alpha-2)/2}. \label{eq:covariation_function_sub_Gaussian}
\end{equation}

\section{Extrapolation methods}\label{sec:methods}

Let $X$ be an $\alpha$-stable random field of the form (\ref{eq:integral_representation}) and $1<\alpha\leq 2$. In Sections \ref{subsec:LSL}--\ref{subsec:MCL}, we present three approaches that yield the weights $\lambda_1,\ldots,\lambda_n \in \R$ of the linear predictor
\begin{equation}
\widehat{X(t_0)} = \sum_{i=1}^n \lambda_i X(t_i) \label{eq:linear_estimator2}
\end{equation}
for the value $X(t_0)$ at the location $t_0 \in \R^d$ based on the values $X(t_1),\ldots,X(t_n)$ at the neighboring locations $t_1,\ldots,t_n$, $n \in \N$.

A predictor $\widehat{X(t_0)}$ for $X(t_0)$ is called \textit{exact} if $\widehat{X(t_0)} = X(t_0)$ almost surely whenever $t_0 = t_i$ for some $i \in \{1,\ldots,n\}$. We say that $\widehat{X(t_0)}$ is \textit{unbiased} if $\E (\widehat{X(t_0)} - X(t_0)) = 0$. It follows from representation (\ref{eq:integral_representation}) that $\E X(t) \equiv 0$ for each $t \in \R^d$, hence the predictor (\ref{eq:linear_estimator2}) is always unbiased.

For each $t_0 \in \R^d$, the linear prediction yields a set of weights $\lambda_1,\ldots,\lambda_n$. Let us consider these weights as functions $\lambda_i:\R^d \to \R$ of the locations $t \in \R^d$ and let $(\Omega,\mathcal{F},\Prob)$ be the underlying probability space of the random field. If the functions $\lambda_i$, $i=1,\ldots,n$, are continuous, the extrapolator $\hat{X}:\R^d\times\Omega \to \R$ given by
\begin{equation*}
\hat{X}(t,\omega) = \sum_{i=1}^n \lambda_i(t) X(t_i,\omega), \quad t \in \R^d,
\end{equation*}
is continuous for each $\omega \in \Omega$. In this case, we say that the predictor is \textit{continuous}.


\subsection{Least Scale Linear (LSL) predictor}\label{subsec:LSL}

We noticed that the idea behind kriging is the minimization of the prediction variance. Since the variance of a Gaussian random variable is twice the scale parameter of a 2-stable random variable, an obvious generalization of the kriging technique is the minimization of the prediction scale
\begin{equation}
\sigma_{\widehat{X(t_0)}-X(t_0)}^\alpha = \int_E \left|f_{t_0}(x) - \sum_{i=1}^n \lambda_i f_{t_i}(x)\right|^\alpha m(dx) \, \to \, \min \label{eq:LSL}
\end{equation}
with respect to $\lambda_1,\ldots,\lambda_n$. Here, $\sigma_{\widehat{X(t_0)}-X(t_0)}$ is the scale parameter of the $\alpha$-stable random variable $\widehat{X(t_0)}-X(t_0)$, cf.~(\ref{eq:integral_scale}). The predictor $\widehat{X(t_0)}$ based on a solution of this minimization problem is called \textit{Least Scale Linear (LSL) predictor}.

The LSL predictor can be further justified as follows. Let $\{\widehat{X_m(t_0)}\}_{m \in \N}$ be a sequence of predictors for $X(t_0)$. First by (\ref{eq:stochastic_convergence}), $\widehat{X_m(t_0)}$ tends to $X(t_0)$ in probability as $m \to \infty$ if and only if $\sigma_{\widehat{X_m(t_0)}-X(t_0)}$ tends to zero as $m \to \infty$. Second by (\ref{eq:p_mean}), the mean $p$-error between $\widehat{X_m(t_0)}$ and $X(t_0)$ tends to zero for every $p \in (0,\alpha)$ as $m \to \infty$ if and only if $\sigma_{\widehat{X_m(t_0)}-X(t_0)}$ tends to zero as $m \to \infty$. That is why it is reasonable to minimize $\sigma_{\widehat{X(t_0)}-X(t_0)}^\alpha$ in (\ref{eq:LSL}).

We now derive the necessary conditions for the existence of the LSL predictor.

\begin{lem}\label{lemma:first_order_conditions}
Let $M$ be an $\alpha$-stable random measure. Then
 $$\frac{\partial \sigma_{\widehat{X(t_0)}-X(t_0)}^\alpha}{\partial \lambda_j} = \alpha \left[X(t_j),\sum_{i=1}^n\lambda_i X(t_i) - X(t_0) \right]_\alpha, \quad j=1,\ldots,n.$$
\end{lem}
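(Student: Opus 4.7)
The plan is to write the scale out as an integral via (\ref{eq:integral_scale}), differentiate under the integral sign, and recognize the resulting integral as a covariation through the kernel formula (\ref{eq:covariation_kernel}). Set
\[
h(x) := \sum_{i\neq j} \lambda_i f_{t_i}(x) - f_{t_0}(x), \qquad \phi(u,x) := \left| u\,f_{t_j}(x) + h(x) \right|^\alpha,
\]
so that $\sigma^\alpha_{\widehat{X(t_0)}-X(t_0)} = \int_E \phi(\lambda_j,x)\,m(dx)$. Since $\alpha \in (1,2]$, the map $u \mapsto |u|^\alpha$ is continuously differentiable with derivative $\alpha\, u^{<\alpha-1>}$, so pointwise in $x$
\[
\frac{\partial \phi(u,x)}{\partial u} = \alpha\bigl(u\,f_{t_j}(x)+h(x)\bigr)^{<\alpha-1>} f_{t_j}(x).
\]

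Next I would justify interchanging differentiation and integration by a dominated convergence argument. Fix $\lambda_j$ and restrict $u$ to a bounded neighborhood $[\lambda_j-\delta,\lambda_j+\delta]$. By the mean value theorem,
\[
\left|\frac{\phi(u+\varepsilon,x)-\phi(u,x)}{\varepsilon}\right| \leq \alpha \bigl|u'\,f_{t_j}(x)+h(x)\bigr|^{\alpha-1} |f_{t_j}(x)|
\]
for some $u'$ in the same neighborhood. Since $\alpha-1\in(0,1]$, subadditivity of $r\mapsto r^{\alpha-1}$ gives
\[
\bigl|u'\,f_{t_j}(x)+h(x)\bigr|^{\alpha-1} \leq |u'|^{\alpha-1}|f_{t_j}(x)|^{\alpha-1} + |h(x)|^{\alpha-1},
\]
so the difference quotient is bounded by a multiple of $|f_{t_j}(x)|^\alpha + |h(x)|^{\alpha-1}|f_{t_j}(x)|$. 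The first term is in $L^1(m)$ because $f_{t_j}$ lies in $L^\alpha(m)$ by the definition of the integral representation; the second is in $L^1(m)$ by Hölder's inequality with conjugate exponents $\alpha/(\alpha-1)$ and $\alpha$, since both $h$ (a finite linear combination of kernels in $L^\alpha$) and $f_{t_j}$ belong to $L^\alpha(m)$. Thus the dominating function is $m$-integrable and differentiation under the integral is valid.

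Carrying out the derivative yields
\[
\frac{\partial \sigma^\alpha_{\widehat{X(t_0)}-X(t_0)}}{\partial \lambda_j}
= \alpha \int_E f_{t_j}(x)\,\Bigl(\textstyle\sum_{i=1}^n \lambda_i f_{t_i}(x) - f_{t_0}(x)\Bigr)^{<\alpha-1>} m(dx).
\]
By the kernel formula (\ref{eq:covariation_kernel}) for the covariation of two integrals with respect to $M$, together with linearity of the stochastic integral, the right-hand side is exactly $\alpha\,[\,X(t_j),\,\sum_{i=1}^n \lambda_i X(t_i)-X(t_0)\,]_\alpha$, which is the desired expression.

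The only delicate point is the dominated convergence step above; the rest is a direct pointwise differentiation followed by reading off formula (\ref{eq:covariation_kernel}). Once $\alpha>1$ one has $\alpha-1\in(0,1]$, and subadditivity together with a single application of Hölder delivers an $L^1$ majorant, so this obstacle is routine rather than serious.
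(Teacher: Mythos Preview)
Your argument is correct, but it takes a different route from the paper's proof. The paper does not work with the kernel representation on $(E,m)$; instead it passes to the spectral measure $\Gamma$ of the vector $(X(t_0),\ldots,X(t_n))^\mathsf{T}$ on the unit sphere $\mathbb{S}^n$ and writes
\[
\sigma_{\widehat{X(t_0)}-X(t_0)}^\alpha = \int_{\mathbb{S}^n}\Bigl|\sum_{i=0}^n \lambda_i s_i\Bigr|^\alpha \Gamma(ds).
\]
Since $\Gamma$ is a finite measure on a compact set, the partial derivative of the integrand is \emph{uniformly} bounded in $s$ (for $\lambda_j$ in a fixed bounded set), so dominated convergence is immediate without any H\"older step; the resulting integral is then identified with the covariation via \cite[Lemma~2.7.5]{ST94}. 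Your approach stays with the integral representation on $(E,m)$, where $m$ need not be finite; this forces you to manufacture an $L^1(m)$ majorant, which you do cleanly via subadditivity of $r\mapsto r^{\alpha-1}$ and one application of H\"older. The trade-off: the paper's route has a lighter domination argument but invokes the spectral representation, while yours is more self-contained within the kernel framework of~(\ref{eq:integral_representation}) and reads the covariation directly from~(\ref{eq:covariation_kernel}).
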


Thus, a solution of the minimization problem (\ref{eq:LSL}) resolves the following system of equations
\begin{equation}
\left[X(t_j),X(t_0) - \sum_{i=1}^n\lambda_i X(t_i)\right]_\alpha=0, \quad j=1, \ldots n, \label{eq:soc_LSL_covariation}
\end{equation}
which can be written as
\begin{equation}
\int_E f_{t_j}(x) \left(f_{t_0}(x) - \sum_{i=1}^n \lambda_i f_{t_i}(x)\right)^{<\alpha-1>}\hspace*{-0.5cm}m(dx)=0, \quad j=1, \ldots n,\label{LSL}
\end{equation}
by (\ref{eq:covariation_kernel}) and the linearity of $\alpha$-stable integrals. Notice that (\ref{LSL}) is nonlinear in $\lambda_1,\ldots,\lambda_n$ if $\alpha < 2$. Therefore, numerical methods have to be applied in this case in order to solve this minimization problem, for example the Levenberg-Marquardt algorithm, see \cite{Mor78}.

\subsubsection{Properties of the LSL predictor}

It is clear that the set of all LSL predictors is closed and convex, see also \cite[p.~59]{DeVL93}.

\begin{thm}[]\label{th:LSLP1}
The LSL predictor has the following properties:
\begin{itemize}
\item[1.] The LSL predictor exists, is unique and exact.
\item[2.] If the random field $X$ is stochastically continuous and the random vector $(X(t_1),\ldots,X(t_n))^\mathsf{T}$ is full-dimensional, then the LSL predictor is continuous.
\end{itemize}
\end{thm}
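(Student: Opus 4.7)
The plan is to recast the minimization (\ref{eq:LSL}) as a best-approximation problem in the Banach space $L^\alpha(E,m)$. Writing $V:=\mathrm{span}\{f_{t_1},\ldots,f_{t_n}\}\subset L^\alpha(E,m)$, the objective in (\ref{eq:LSL}) becomes $\|f_{t_0}-g\|_{L^\alpha}^\alpha$ with $g=\sum_{i=1}^n\lambda_i f_{t_i}\in V$. Since $V$ is finite-dimensional (hence closed) and $L^\alpha(E,m)$ is uniformly convex for $1<\alpha\leq 2$, a unique best approximation $g^{\ast}\in V$ to $f_{t_0}$ exists. Decomposing $g^{\ast}=\sum_i \lambda_i f_{t_i}$ (with possibly non-unique weights when the $f_{t_i}$ are $L^\alpha$-linearly dependent), the resulting random variable $\widehat{X(t_0)}=\int_E g^{\ast}(x)\,M(dx)$ is determined almost surely, since by (\ref{eq:stochastic_convergence}) the stochastic integral depends only on the $L^\alpha$-equivalence class of its integrand. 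This gives both existence and uniqueness of the predictor. Exactness is then immediate: when $t_0=t_j$, picking $\lambda_j=1$ and $\lambda_i=0$ for $i\neq j$ produces $g=f_{t_0}$, attaining the value $0$; uniqueness of $g^{\ast}$ in $L^\alpha$ forces $g^{\ast}=f_{t_0}$, whence $\widehat{X(t_0)}=X(t_0)$ a.s.

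For the continuity statement, I would decompose $t_0\mapsto\lambda_i(t_0)$ as a composition of three continuous maps. First, stochastic continuity of $X$ and the equivalence (\ref{eq:stochastic_convergence}) together imply that the map $t_0\mapsto f_{t_0}$ from $\R^d$ into $L^\alpha(E,m)$ is continuous. Second, the metric projection $P_V:L^\alpha(E,m)\to V$ onto the fixed finite-dimensional subspace is continuous, by uniform convexity of $L^\alpha$ and the finite dimension of $V$. Third, under the full-dimensionality assumption, Lemma~\ref{lemma:great_subsphere} guarantees that $f_{t_1},\ldots,f_{t_n}$ are linearly independent in $L^\alpha(E,m)$, so the coordinate functionals $\sum_i \lambda_i f_{t_i}\mapsto \lambda_j$ on $V$ are well-defined linear maps on a finite-dimensional normed space, hence continuous. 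Composing these three maps yields continuity of each $t_0\mapsto\lambda_i(t_0)$.

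The main obstacle will be verifying continuity of the metric projection $P_V$ for $\alpha\in(1,2)$, where $P_V$ is no longer linear. I plan to handle it by a compactness-plus-uniqueness argument: given $g_k\to g$ in $L^\alpha(E,m)$, the sequence $(P_V(g_k))$ is bounded in $V$ (since $\|P_V(g_k)\|\leq \|g_k\|+\|g_k-P_V(g_k)\|\leq 2\|g_k\|$), so every subsequence admits a further subsequence converging in $V$ to some $h$; the lower-semicontinuity of the norm and the minimization property force $\|g-h\|_{L^\alpha}=\|g-P_V(g)\|_{L^\alpha}$, and strict convexity then gives $h=P_V(g)$, so the entire sequence converges. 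The remaining steps (existence, uniqueness, exactness, and the finite-dimensional norm-equivalence step) are routine once the $L^\alpha$-reformulation is in place.
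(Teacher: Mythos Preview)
Your proposal is correct and follows essentially the same strategy as the paper: both recast (\ref{eq:LSL}) as best approximation in $L^\alpha(E,m)$ by a finite-dimensional subspace, invoke strict (or uniform) convexity for existence and uniqueness of the best approximant, and then pass from continuity of the best approximant to continuity of the coefficients via the linear independence of $f_{t_1},\ldots,f_{t_n}$ granted by Lemma~\ref{lemma:great_subsphere}.

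The differences are stylistic rather than substantive. The paper outsources the two analytic facts to DeVore--Lorentz (\cite[Theorems~1.1 and~1.2, p.~59--60]{DeVL93}), whereas you supply direct arguments; in particular your compactness-plus-uniqueness proof that $P_V$ is continuous is effectively a proof of the cited theorem. For the last step, the paper argues by contradiction: it assumes $\lambda_i(s)\nrightarrow\lambda_i(t)$, normalizes the coefficient differences to extract a limit vector $(\gamma_1,\ldots,\gamma_n)\in\mathbb{S}^{n-1}$, and shows $\sum_i\gamma_i f_{t_i}=0$ $m$-a.e., contradicting full-dimensionality. Your route---observing that coordinate functionals on a finite-dimensional normed space are automatically continuous---is more direct and arguably cleaner. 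Conversely, your treatment of uniqueness when the $f_{t_i}$ may be linearly dependent (distinguishing uniqueness of $g^\ast\in V$ from uniqueness of the weights, and noting that the stochastic integral depends only on the $L^\alpha$-class) is slightly more explicit than the paper's one-line citation.
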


\begin{remark}
The first and second assertion of Theorem~\ref{th:LSLP1} follow from the properties of the best approximation in $L^\alpha(E,m)$-spaces for $1\leq \alpha\leq 2$. Notice that if $0<\alpha<1$, the quantity $\Vert f \Vert_{L^\alpha(E,m)} = (\int_E \vert f(x)\vert^\alpha m(dx))^{1/\alpha}$ is not a norm anymore (there is neither the triangle inequality nor Minkowski's inequality). Thus for $0<\alpha<1$, the existence and uniqueness of the LSL estimator requires additional investigation.
\end{remark}


\subsection{Covariation Orthogonal Linear (COL) predictor}

Let $X$ be an $\alpha$-stable random field. We have seen that the LSL predictor can be only calculated by numerical methods because the system of equations (\ref{LSL}) is nonlinear. The linearity of the covariation in the first entry suggests to exchange both entries in (\ref{eq:soc_LSL_covariation}) such that we get the system of linear equations
\begin{equation}
\left[X(t_0),X(t_j)\right]_\alpha - \sum_{i=1}^n \lambda_i\left[X(t_i),X(t_j)\right]_{\alpha} = 0, \quad j=1,\ldots,n. \label{eq:soc2}
\end{equation} 
Since the covariation of $X(t_0) - \sum_{i=1}^n \lambda_iX(t_i)$ on $X(t_j)$ is zero for $j=1,\ldots,n$, the linear predictor $\widehat{X(t_0)}=\sum_{i=1}^n \lambda_iX(t_i)$ based on any solution $(\lambda_1,\ldots,\lambda_n)$ of (\ref{eq:soc2}) is called after~\cite{Hil00} \textit{Covariation Orthogonal Linear (COL) predictor}.

The unknown quantities $[X(t_0),X(t_j)]_\alpha$ and $[X(t_i),X(t_j)]_\alpha$ in (\ref{eq:soc2}) can be estimated either by using (\ref{eq:mixed_moments}) in the symmetric case or by estimating the kernel functions and using (\ref{eq:covariation_kernel}) in the non-symmetric case. By (\ref{eq:soc2}), the COL predictor is the solution of the following system of equations
\begin{eqnarray*}
&&[X(t_0),X(t_i)]_\alpha = [\widehat{X(t_0)},X(t_i)]_\alpha, \quad i=1,\ldots,n,
\end{eqnarray*}
which is equivalent to
\begin{eqnarray*}
\E \left(X(t_0) X(t_i)^{<p-1>}\right) = \E \left(\widehat{X(t_0)}X(t_i)^{<p-1>}\right), \quad i=1,\ldots,n,
\end{eqnarray*}
in the case of an $S\alpha S$ random field by (\ref{eq:mixed_moments}).

If the similarity of two $\alpha$-stable random variables is measured by the covariation, the first system of equations ensures that the similarity of the theoretical value $X(t_0)$ and $X(t_i)$ is exactly equal to the similarity of the predictor $\widehat{X(t_0)}$ and $X(t_i)$. The second system of equations guarantees that the ''mixed'' moments between the theoretical value $X(t_0)$ and $X(t_i)$ agree with the ''mixed'' moments between the predictor $\widehat{X(t_0)}$ and $X(t_i)$.

For a detailed treatment of the COL predictor of discrete stationary ARMA $\alpha$-stable processes, we refer to \cite{Hil00}. The following lemma connects linear regression with the COL predictor, see~\cite[Corollary~4.1.3]{ST94}.

\begin{lem}\label{lemma:COL_LSL}
If the regression of $X(t_0)$ on the random vector $(X(t_1),\ldots,X(t_n))^\mathsf{T}$ is linear, i.~e. 
there exists some $(\lambda_1, ..., \lambda_n) \in \R^n$ such that 
$
\E(X(t_0)|X(t_1),\ldots,X(t_n)) = \sum_{i=1}^n \lambda_i X(t_i) 
$
almost surely, then the vector $(\lambda_1,\ldots,\lambda_n)^\mathsf{T}$ is a solution of the system of equations (\ref{eq:soc2}).
\end{lem}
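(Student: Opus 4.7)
The plan is to reduce the claim directly to the Cambanis--Wu characterization (\ref{eq:condition1}) that is quoted in the preliminaries. The trick is to rewrite the system (\ref{eq:soc2}) in a form that lines up with that condition, and then specialize the test vector $t$ appropriately.

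First, I would use the linearity of the covariation in its first argument to rewrite the system (\ref{eq:soc2}) in the equivalent form
\begin{equation*}
\left[X(t_0) - \sum_{i=1}^n \lambda_i X(t_i),\, X(t_j)\right]_\alpha = 0, \quad j=1,\ldots,n.
\end{equation*}
Next, I would recall the formula for the covariation of two linear combinations of coordinates of a jointly $\alpha$-stable random vector in terms of its spectral measure. Letting $\Gamma$ denote the spectral measure on $\mathbb{S}^n$ of the random vector $(X(t_0),X(t_1),\ldots,X(t_n))^{\mathsf{T}}$ with coordinates $(v,u_1,\ldots,u_n)$, one has, for any $a,b\in\R^{n+1}$,
\begin{equation*}
\left[\sum_{i=0}^n a_i X(t_i),\, \sum_{i=0}^n b_i X(t_i)\right]_\alpha
= \int_{\mathbb{S}^n} \langle a,(v,u)\rangle \, \langle b,(v,u)\rangle^{<\alpha-1>} \, \Gamma(dv,du).
\end{equation*}
Applying this with $a=(1,-\lambda_1,\ldots,-\lambda_n)$ and $b$ equal to the $(j+1)$-th coordinate vector yields
\begin{equation*}
\left[X(t_0)-\sum_{i=1}^n \lambda_i X(t_i),\, X(t_j)\right]_\alpha = \int_{\mathbb{S}^n} \bigl(v - \langle\lambda,u\rangle\bigr)\, u_j^{<\alpha-1>} \, \Gamma(dv,du).
\end{equation*}

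Finally, I would invoke the Cambanis--Wu criterion (\ref{eq:condition1}) with $c=\lambda$. The assumption that $\E(X(t_0)\mid X(t_1),\ldots,X(t_n)) = \sum_{i=1}^n \lambda_i X(t_i)$ a.s.\ gives that the integral in (\ref{eq:condition1}) vanishes for every $t\in\R^n$; specializing to $t=e_j$ produces exactly the integral displayed above, so the covariation is $0$ for each $j=1,\ldots,n$, which is (\ref{eq:soc2}).

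The main obstacle is really only bookkeeping: one must match the spectral-measure coordinates (the ``$v$'' for $X(t_0)$ and the ``$u$'' for $(X(t_1),\ldots,X(t_n))^{\mathsf{T}}$) in (\ref{eq:condition1}) with the coordinate vectors used in the covariation formula, and verify that the choice $t=e_j$ picks out precisely $u_j^{<\alpha-1>}$. No further analytic work is needed, since both the covariation identity for linear combinations and the characterization (\ref{eq:condition1}) are already in hand.
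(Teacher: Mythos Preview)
Your argument is correct. The paper itself does not give an independent proof of this lemma but simply cites \cite[Corollary~4.1.3]{ST94}; your route via the Cambanis--Wu criterion (\ref{eq:condition1}), specialized to $t=e_j$ and combined with the spectral-measure formula for the covariation of linear combinations (cf.\ \cite[Lemma~2.7.5]{ST94}, also used in the proof of Lemma~\ref{lemma:first_order_conditions}), is a perfectly valid self-contained derivation using only tools already stated in the preliminaries.
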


The regression of $X(t_0)$ on $X(t_1)$ is always linear, see~\cite[Theorem~4.1.2]{ST94}. If $n\geq 2$, then $(X(t_0), X(t_1),\ldots,X(t_n))^\mathsf{T}$ has the multiple regression property if and only if $(X(t_0), X(t_1),\ldots,X(t_n))^\mathsf{T}$ is Gaussian or sub-Gaussian, see~\cite[Proposition~4.1.7]{ST94}. Thus for $n \geq 2$, Lemma~\ref{lemma:COL_LSL} can be applied to Gaussian and sub-Gaussian random fields.

In the next subsections, we analyze existence, uniqueness and continuity of the COL predictor and concentrate on moving average, Gaussian and sub-Gaussian random fields. Only for these kinds of $\alpha$-stable random fields, we know how to derive reasonable sufficient conditions such that the properties hold. Notice that for $0<\alpha<2$, any stationary $S\alpha S$ random field $X$ can be decomposed into three independent parts $X=X^{(1)}+X^{(2)}+X^{(3)}$, where $X^{(1)}$ is a mixed moving average, $X^{(2)}$ is harmonizable, and $X^{(3)}$ is a stationary $S\alpha S$ process without mixed moving average or harmonizable components, see \cite{Ros00}. Moving averages are a subclass of mixed moving averages, while sub-Gaussian random fields belong to the classes $X^{(2)}$ or $X^{(3)}$, cf.~\cite[Theorem~4.7.5 and Theorem~6.6.5]{ST94}.

Exactness of the COL predictor holds for all kinds of $\alpha$-stable random fields which directly follows from the corresponding system of equations (\ref{eq:soc2}).

\subsubsection{Properties of the COL predictor for Gaussian and sub-Gaussian random fields}\label{sec:COL_properties_GSG}

Let us first consider a stationary zero mean Gaussian random field $G$ with a positive definite covariance function $C$. The system of linear equations (\ref{eq:soc2}) becomes

\begin{equation}
 \begin{pmatrix}
 C(0) & \cdots & C(t_n-t_1) \\
 \vdots & \ddots & \vdots \\
 C(t_n-t_1) & \cdots & C(0)\end{pmatrix} \begin{pmatrix} 
		\lambda_1 \\
		\vdots \\
		\lambda_n
	      \end{pmatrix} = \begin{pmatrix}
				C(t_0-t_1) \\
				\vdots \\
				C(t_0-t_n)
\end{pmatrix}.\label{eq:soc3}
\end{equation}
Therefore, the matrix on the left hand side of (\ref{eq:soc3}) is positive definite and the system of equations has a unique solution, i.~e. the COL predictor exists and is unique. Notice that the system (\ref{eq:soc3}) coincides with the system of linear equations for the weights in simple kriging, see e.~g. \cite[p.~24]{Wac98}.

In the sub-Gaussian case, the system of linear equations (\ref{eq:soc4}) for the weights of the COL predictor coincides with (\ref{eq:soc3}) due to (\ref{eq:covariation_function_sub_Gaussian}).

\begin{remark}
The fact that this method is well tractable makes it attractive for applications. Nevertheless it cannot cover all $\alpha$-stable random fields because sub-Gaussian random fields have the property of long dependence (see~\cite[Proposition~4.7.4]{ST94}), so they are a very particular case of stable fields.
\end{remark}

\begin{thm}\label{th:COL_continuity}
If the covariance function $C$ is continuous, then the COL predictor for Gaussian and sub-Gaussian random fields is continuous.
\end{thm}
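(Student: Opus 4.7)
The plan is to exhibit the weights $\lambda_1(t_0),\ldots,\lambda_n(t_0)$ as explicit continuous functions of $t_0$, and then invoke the definition of continuous predictor from Section~\ref{sec:methods}.

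First, I would recall that by the derivation leading to (\ref{eq:soc3}), the COL weights in the Gaussian case solve the linear system
$$\Sigma\,\boldsymbol{\lambda}(t_0) \;=\; \boldsymbol{c}(t_0),$$
where $\Sigma = (C(t_i-t_j))_{i,j=1}^n$ is the covariance matrix of $(X(t_1),\ldots,X(t_n))^\mathsf{T}$ and $\boldsymbol{c}(t_0) = (C(t_0-t_1),\ldots,C(t_0-t_n))^\mathsf{T}$. Crucially, $\Sigma$ does not depend on $t_0$; it is positive definite (hence invertible) under the standing assumption that $C$ is a positive definite covariance function, so $\boldsymbol{\lambda}(t_0) = \Sigma^{-1}\boldsymbol{c}(t_0)$.

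Next, since $C$ is assumed continuous on $\R^d$, the map $t_0 \mapsto \boldsymbol{c}(t_0)$ is continuous, and therefore each component $\lambda_i(t_0) = (\Sigma^{-1}\boldsymbol{c}(t_0))_i$ is a continuous function of $t_0$. By the definition of a continuous predictor given at the end of Section~\ref{sec:methods} (just before Subsection~\ref{subsec:LSL}), this already gives pathwise continuity of $\hat X(t,\omega) = \sum_{i=1}^n \lambda_i(t)\,X(t_i,\omega)$ for every $\omega\in\Omega$, proving the Gaussian case.

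For the sub-Gaussian case I would observe that by (\ref{eq:covariation_function_sub_Gaussian}) we have $\kappa(s,t) = 2^{-\alpha/2}\,C(0)^{(\alpha-2)/2}\,C(s-t)$, so the constant factor $2^{-\alpha/2}C(0)^{(\alpha-2)/2}$ cancels from both sides of the defining system (\ref{eq:soc2}), reducing it to exactly (\ref{eq:soc3}). The weights therefore coincide with those from the Gaussian case and the same continuity argument applies. The only potential obstacle is ensuring that the cancellation is legitimate, i.e.\ that $C(0)>0$; this is automatic since otherwise $G\equiv 0$ almost surely and the statement is trivial.
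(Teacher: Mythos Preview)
Your argument is correct and follows essentially the same route as the paper: both observe that the coefficient matrix in (\ref{eq:soc3}) is a fixed invertible operator (independent of $t_0$) so that the weights are given by applying its inverse to the continuous right-hand side $\boldsymbol{c}(t_0)$. The paper phrases the invertibility-implies-continuous-inverse step via the open mapping theorem, whereas you simply write $\boldsymbol{\lambda}(t_0)=\Sigma^{-1}\boldsymbol{c}(t_0)$ directly; in finite dimensions these are equivalent, and your explicit treatment of the sub-Gaussian reduction is a welcome clarification.
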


If $t_0 \neq t_i$ for $i=1,\ldots,n$, the Maximum-Likelihood (ML) predictor $\widehat{X(t_0)}$ of $X(t_0)$ is the solution of the optimization problem
$$f_{X(t_0)|X(t_1)=x_1,\ldots,X(t_n)=x_n}(x) \, \to \, \max\limits_{x \in \R}.$$
If $t_0 = t_i$ for some $i=1,\ldots,n$, we have
\begin{equation}
\Prob(X(t_0) = x_i|X(t_1)=x_1,\ldots,X(t_i)=x_i,\ldots,X(t_n)=x_n) = 1\label{eq:ML_exact}
\end{equation}
such that the ML predictor is $\widehat{X(t_0)} = X(t_i)$. Closed formulas for the ML predictor for sub-Gaussian random fields are provided by \cite{Pai98}.

\begin{thm}\label{th:ML_LSL_COL}
For Gaussian and sub-Gaussian random fields, the ML predictor is equal to the COL and LSL predictor.
\end{thm}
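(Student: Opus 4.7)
The plan is to show that under either hypothesis, all three predictors reduce to the classical simple kriging predictor, i.e.\ to the linear combination with weights determined by the Gaussian kriging system (\ref{eq:soc3}). Since Theorem \ref{th:COL_continuity} together with the derivation of (\ref{eq:soc3}) already shows that in both cases the COL predictor is the unique solution of this system, the task reduces to verifying that the LSL and ML criteria lead to the same weights.

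For the LSL predictor, the Gaussian case ($\alpha = 2$) is immediate since the scale-parameter criterion (\ref{eq:LSL}) is, up to a constant factor, the kriging variance. In the sub-Gaussian case I would use the representation $X(t) \stackrel{d}{=} A^{1/2} G(t)$ with $A$ and $G$ independent to write
$$\widehat{X(t_0)} - X(t_0) \stackrel{d}{=} A^{1/2}\left(\sum_{i=1}^n \lambda_i G(t_i) - G(t_0)\right).$$
The bracketed Gaussian has variance
$$\sigma_G^2(\lambda) = C(0) - 2\sum_{i=1}^n \lambda_i C(t_0-t_i) + \sum_{i,j=1}^n \lambda_i \lambda_j C(t_i-t_j),$$
and the $S\alpha S$ scale of the product on the right-hand side is a fixed positive multiple of $\sigma_G(\lambda)$. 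Hence minimising (\ref{eq:LSL}) is equivalent to minimising the kriging variance, yielding the system (\ref{eq:soc3}).

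For the ML predictor, the Gaussian case is again immediate: the conditional distribution of $X(t_0)$ given $(X(t_1),\ldots,X(t_n))$ is itself Gaussian, so its mode agrees with its mean, which is the simple kriging predictor. In the sub-Gaussian case the vector $(X(t_0),\ldots,X(t_n))$ is elliptically contoured (a scale mixture of one centred Gaussian by the common $A^{1/2}$), and the conditional distribution of $X(t_0)$ given the other coordinates is again a scale mixture of Gaussians, symmetric about its centre. That centre is the linear conditional mean, which by the multiple regression property of sub-Gaussian vectors and Lemma \ref{lemma:COL_LSL} coincides with the COL predictor. Hence ML $=$ COL. The degenerate case $t_0 = t_i$ is handled separately by (\ref{eq:ML_exact}) together with the exactness assertion for the LSL and COL predictors established earlier.

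The main obstacle will be the ML step in the sub-Gaussian case: one needs a careful justification that the sub-Gaussian conditional density is \emph{unimodal} at its symmetry centre, not merely symmetric. I would derive this either from the general theory of elliptical conditional distributions or, following \cite{Pai98}, by writing the conditional density as an explicit integral over the mixing variable $A$ and invoking Khintchine's theorem on mixtures of symmetric unimodal densities. Everything else amounts to reading off the same linear system (\ref{eq:soc3}) from three different optimality conditions.
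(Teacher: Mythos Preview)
Your LSL argument matches the paper's exactly: write $\widehat{X(t_0)}-X(t_0)\stackrel{d}{=}A^{1/2}\sum_{i=0}^n\lambda_iG(t_i)$, observe that the scale parameter is a monotone function of the Gaussian variance, and recover the kriging system (\ref{eq:soc3}) by differentiation.

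Your ML argument, however, takes a genuinely different route. The paper does not appeal to elliptical symmetry or unimodality at all. Instead it quotes from \cite{Pai98} an explicit closed form for the sub-Gaussian ML predictor in terms of the lower-triangular matrix $A$ satisfying $A\,\Omega_{t_1,\ldots,t_n,t_0}A^\mathsf{T}=2I$ (so $A=B^{-1}$ with $B$ the Cholesky factor of $\tfrac12\Omega$), namely $\widehat{X(t_0)}=\sum_i(-a_{n+1,i}/a_{n+1,n+1})X(t_i)$. A short matrix identity $\tfrac{1}{2a_{n+1,n+1}}\Omega A^\mathsf{T}=\tfrac{1}{a_{n+1,n+1}}B$ then shows that the last column reproduces precisely the kriging system (\ref{eq:soc3}), and uniqueness of COL finishes. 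This is pure linear algebra once \cite{Pai98} is granted, and it sidesteps any discussion of conditional densities.

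Your approach is more conceptual and arguably more illuminating: it explains \emph{why} the ML predictor must be linear (elliptical conditional location) rather than verifying it by computation. The cost is the unimodality step you flagged. That step is real but not hard here: the sub-Gaussian density has the form $g(x^\mathsf{T}\Sigma^{-1}x)$ with $g$ strictly decreasing (being a mixture of decreasing Gaussian radial profiles), and for such elliptical laws the conditional density of one coordinate is proportional to $g(c+(x_0-m)^2/\tau^2)$, hence unimodal at the regression centre $m$. Either this direct argument or the Khintchine route you mention will close the gap cleanly; invoking \cite{Pai98} for the explicit conditional density is also fine but then you are essentially back to the paper's computation.
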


There exists an example for which the COL predictor is distinct from the LSL predictor.

\begin{example}\label{ex:LSL_neq_COL}
Let $X$ be the $S\alpha S$ L\'evy motion defined by $X(t)=\int_0^\infty \ind(x\leq t) M(dx)$, where $M$ is an $S\alpha S$ random measure with Lebesgue control measure. Let $t_0 = 3/4$ and $t_1 = 1$. Then the optimization problem for the LSL predictor is
\begin{eqnarray*}
\sigma_{\widehat{X(t_0)}-X(t_0)}^\alpha &=& \int_0^{3/4} \vert 1-\lambda_1 \vert^\alpha dx + \int_{3/4}^1 \vert \lambda_1 \vert^\alpha dx \\
&=& \frac{3}{4}\vert 1-\lambda_1\vert^\alpha + \frac{1}{4}\vert \lambda_1 \vert^\alpha \, \to \, \min\limits_{\lambda_1}.
\end{eqnarray*}
The last equation implies that the minimum is attained for some $\lambda_1 \in [0,1]$ and we can therefore replace the absolute values by brackets. By setting the derivative of the last equation with respect to $\lambda_1$ equal to zero, we obtain the LSL predictor
$$\widehat{X(t_0)} = \frac{1}{1+(1/3)^{1/(\alpha-1)}} X(t_1).$$
The COL predictor is
$$\widehat{X(t_0)} = \frac{[X(t_0),X(t_1)]_\alpha}{[X(t_1),X(t_1)]_\alpha} X(t_1) = \frac{3}{4} X(t_1)$$
and thus differs from the LSL predictor except for $\alpha=2$.
\end{example}

\subsubsection{Properties of the COL predictor for moving averages}\label{sec:COL_properties_MA}

In this subsection, we restrict our setting to moving average random fields of the form
\begin{equation}
X(t) = \int_{\R^d} f(t-x) M(dx), \quad t \in \R^d, \label{eq:MA}
\end{equation}
where $M$ is an $\alpha$-stable random measure with Lebesgue control measure. This restriction allows for imposing conditions on the kernel functions that guarantee the existence, uniqueness and continuity of the COL predictor.

We now state some properties of such moving average random fields. Let $h \in \R^d$. Since $X$ is stationary, it holds $[X(h),X(0)]_\alpha = [X(t+h),X(t)]_\alpha$ for each $t \in \R^d$. Therefore, the covariation function of $X$ is a function of just one variable and we use the notation $\kappa(h) = [X(h),X(0)]_\alpha$.

\begin{lem}\label{lemma:covariation_function}
If the kernel function $f:\R^d \to \R_+$ of the moving average (\ref{eq:MA}) is positive semi-definite, then the covariation function $\kappa$ is positive semi-definite. If $f:\R^d \to \R_+$ is positive definite and positive on a set with positive Lebesgue measure, then $\kappa$ is positive definite.
\end{lem}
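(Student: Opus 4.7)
The plan is to rewrite $\kappa$ as a convolution and then invoke Bochner's theorem. Since $f_t(x)=f(t-x)$ and $f\ge 0$ makes the signed power coincide with the ordinary one, formula (\ref{eq:covariation_kernel}) becomes $\kappa(s,t)=\int_{\R^d}f(s-x)f(t-x)^{\alpha-1}\,dx$. The substitution $y=t-x$, together with the evenness of a real positive semi-definite function, yields
\[
\kappa(h)=\int_{\R^d}f(h+y)\,f(y)^{\alpha-1}\,dy=(f*f^{\alpha-1})(h).
\]

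Next, for any $t_1,\dots,t_n\in\R^d$ and real $c_1,\dots,c_n$, I would compute
\[
\sum_{i,j=1}^n c_ic_j\,\kappa(t_i-t_j)=\int_{\R^d}\Bigl(\sum_{i=1}^n c_if(t_i-u)\Bigr)\Bigl(\sum_{j=1}^n c_jf(t_j-u)^{\alpha-1}\Bigr)\,du
\]
and aim to rewrite it as a manifestly nonnegative quadratic form. Applying Bochner's theorem to write $f=\hat\mu$ for a finite nonnegative measure $\mu$, substituting this spectral representation into the first factor and applying Fubini, the quadratic form becomes $\int_{\R^d}\bigl|\sum_i c_i e^{i\langle t_i,\xi\rangle}\bigr|^2\,\rho(d\xi)$ for an auxiliary measure $\rho$ built from $\mu$ and $f^{\alpha-1}$. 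For $\alpha=2$ one checks directly that $\rho=\mu*\mu$, and positivity is immediate since then $\hat\kappa=\hat f^2\ge 0$. For $\alpha\in(1,2)$ the positivity of $\rho$ is the key step, and I would argue it using the Bernstein representation
\[
x^{\alpha-1}=c_\alpha\int_0^\infty(1-e^{-sx})\,s^{-\alpha}\,ds,\quad x\ge 0,
\]
which expresses the fractional power as a positive integral of simpler pieces that pair well with the nonnegativity of $\hat f$.

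For the second assertion, the additional hypothesis that $f$ is positive on a set of positive Lebesgue measure propagates through the representation above so that $\rho$ charges a set of positive Lebesgue measure; hence $\int\bigl|\sum_i c_ie^{i\langle t_i,\xi\rangle}\bigr|^2\,\rho(d\xi)>0$ whenever $(c_1,\dots,c_n)\ne 0$, which gives strict positive definiteness of $\kappa$.

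The main obstacle is establishing $\rho\ge 0$ when $\alpha\in(1,2)$: although $\hat f\ge 0$, the Fourier transform of the fractional power $f^{\alpha-1}$ need not be nonnegative, so the positivity of $\rho$ cannot come from $\widehat{f^{\alpha-1}}$ alone and must be extracted from an explicit coupling of the Bernstein representation of $x^{\alpha-1}$ with the spectral measure $\mu$ of $f$ and the pointwise positivity $f\ge 0$.
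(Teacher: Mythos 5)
Your reduction is sound up to the point you yourself flag, but that point is the whole content of the lemma for $\alpha\in(1,2)$: you never establish that the auxiliary measure $\rho$ is nonnegative, so the proposal proves positive semi-definiteness of $\kappa$ only for $\alpha=2$. The difficulty is structural: your bilinear expansion has $f$ in one factor and $f^{\alpha-1}$ in the other, so the quadratic form is not a square, and the Fourier factorization $\widehat{\kappa}=\widehat{f}\cdot\widehat{f^{\alpha-1}}$ shifts the entire burden onto the sign of $\widehat{f^{\alpha-1}}$, which is not controlled by the hypotheses ($f\ge 0$ and $f$ positive semi-definite do not make $f^{\alpha-1}$ positive semi-definite). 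The proposed repair does not obviously close this: the Bernstein identity $x^{\alpha-1}=c_\alpha\int_0^\infty(1-e^{-sx})s^{-\alpha}\,ds$ replaces $f^{\alpha-1}$ by the functions $1-e^{-sf}=\sum_{k\ge 1}(-1)^{k+1}s^kf^k/k!$, an alternating combination of the (Schur-product) positive semi-definite powers $f^k$, which again need not be positive semi-definite, and nothing in your sketch extracts the required sign from the "coupling" with $\mu$. The strict-definiteness part inherits the same gap, and in addition the claims that $\rho$ charges a set of positive Lebesgue measure and that $\sum_i c_ie^{i\langle t_i,\xi\rangle}$ cannot vanish $\rho$-almost everywhere are asserted rather than argued. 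As it stands, the proposal is a program, not a proof.

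The paper's argument is entirely different and stays in the space domain: by linearity of the covariation in its first argument and stationarity, $\sum_{i,j}z_iz_j\kappa(t_i-t_j)=\bigl[\sum_{i,j}z_iz_jX(t_i-t_j),X(0)\bigr]_\alpha=\int_{\R^d}\bigl(\sum_{i,j}z_iz_jf(t_i-t_j-x)\bigr)f^{\alpha-1}(-x)\,dx$, and the two hypotheses are used right there -- positive semi-definiteness of $f$ for the bracketed sum and $f\ge 0$ for the weight $f^{\alpha-1}(-x)$ -- with strict positivity obtained by restricting the integral to $-A$, where $A=\{f>0\}$ has positive Lebesgue measure. No Bochner representation, spectral measure or Bernstein formula appears. (The paper is admittedly terse exactly where it treats the bracketed sum as nonnegative for each fixed $x$, which underlines that the sign question you isolated is the real crux; but your Fourier route does not answer it either, it only reformulates it.) To complete your approach you would have to prove the positivity statement about $\rho$ that the paper's direct pointwise argument is designed to avoid.
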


It is clear that the COL predictor is unique if the covariation function is positive definite since the matrix of the system of linear equations
\begin{equation}
 \begin{pmatrix}
 \kappa(0) & \cdots & \kappa(t_n-t_1) \\
 \vdots & \ddots & \vdots \\
 \kappa(t_n-t_1) & \cdots & \kappa(0)\end{pmatrix} \begin{pmatrix} 
		\lambda_1 \\
		\vdots \\
		\lambda_n
	      \end{pmatrix} = \begin{pmatrix}
				\kappa(t_0-t_1) \\
				\vdots \\
				\kappa(t_0-t_n)
\end{pmatrix}\label{eq:soc4}
\end{equation}
is positive definite. An example of an $\alpha$-stable moving average with a positive definite kernel function is the $S\alpha S$ Ornstein-Uhlenbeck process with
$$X(t) = \int_\R e^{-\lambda(t-x)}\ind(t-x\geq 0) M(dx), \quad t \in \R,$$
for some $\lambda > 0$. If $t_1<t_2<\ldots<t_n<t_0$, then the regression of $X(t_0)$ on $(X(t_1),\ldots,X(t_n))^\mathsf{T}$ is linear (cf.~\cite[p.~138]{ST94}), i.~e. $\widehat{X(t_0)}=e^{-\lambda(t_{0}-t_n)} X(t_n)$.

\begin{thm}
If the covariation function is positive definite and continuous, then the COL predictor is continuous.
\end{thm}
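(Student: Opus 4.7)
The plan is to read off the weights of the COL predictor directly from the linear system (\ref{eq:soc4}) and show that, as functions of the extrapolation location $t_0$, they inherit continuity from $\kappa$.

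First I would fix the observation points $t_1,\ldots,t_n \in \R^d$ and write the system (\ref{eq:soc4}) in the compact form $K \lambda(t_0) = k(t_0)$, where $K = (\kappa(t_i-t_j))_{i,j=1}^n$ is the $n\times n$ covariation matrix of $(X(t_1),\ldots,X(t_n))^\mathsf{T}$ and $k(t_0) = (\kappa(t_0-t_1),\ldots,\kappa(t_0-t_n))^\mathsf{T}$ is the right-hand side depending on $t_0$. The matrix $K$ does not depend on $t_0$, so it is a fixed object throughout the argument.

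Next I would invoke positive definiteness of $\kappa$: by assumption $K$ is a positive definite (and hence invertible) matrix, so the unique solution is $\lambda(t_0) = K^{-1} k(t_0)$. Writing $\lambda(t_0) = (\lambda_1(t_0),\ldots,\lambda_n(t_0))^\mathsf{T}$, each coordinate is a fixed linear combination of the entries of $k(t_0)$.

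Then I would use continuity of $\kappa$: since $\kappa$ is continuous on $\R^d$, each map $t_0 \mapsto \kappa(t_0-t_i)$ is continuous, so $t_0 \mapsto k(t_0)$ is continuous as a map into $\R^n$. Composing with the continuous (in fact linear) map $K^{-1}$ shows that each $\lambda_i(\cdot)$ is continuous on $\R^d$. By the definition of a continuous predictor given just before Section~\ref{subsec:LSL}, this is exactly what is needed: the extrapolator $\hat X(t,\omega) = \sum_{i=1}^n \lambda_i(t) X(t_i,\omega)$ is continuous in $t$ for every $\omega$.

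I do not anticipate any real obstacle: the only things being used are that positive definiteness of $\kappa$ yields an invertible Gram-type matrix $K$ and that continuity of $\kappa$ transfers to continuity of $k(\cdot)$, after which the linearity of $\lambda(t_0) = K^{-1} k(t_0)$ in $k(t_0)$ closes the argument. The positive definiteness hypothesis is exactly what prevents the system (\ref{eq:soc4}) from becoming degenerate as $t_0$ varies, so uniqueness (and hence continuity) of the weight vector is preserved uniformly in $t_0$.
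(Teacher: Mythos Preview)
Your proposal is correct and follows essentially the same route as the paper. The paper's own proof (by analogy with Theorem~\ref{th:COL_continuity}) invokes the open mapping theorem to deduce continuity of $K^{-1}$, whereas you simply note that $K^{-1}$ is a fixed linear map on $\R^n$; in finite dimensions this is of course equivalent and, if anything, your phrasing is the more economical one.
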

The proof is analogous to the one of Theorem~\ref{th:COL_continuity}. For example, all continuous kernel functions with compact support yield a continuous covariation function by the dominated convergence theorem.

\subsection{Maximization of Covariation Linear (MCL) predictor}\label{subsec:MCL}

The idea of this method is to maximize the covariation
$$\left[\widehat{X(t_0)},X(t_0)\right]_\alpha = \sum_{i=1}^n \lambda_i \left[X(t_i),X(t_0)\right]_\alpha$$
between the linear predictor $\widehat{X(t_0)}$ and the theoretical value $X(t_0)$. In general, if we do not impose any further restrictions and if $\left[X(t_i),X(t_0)\right]_\alpha \neq 0$ for some $i \in \{1,\ldots,n\}$, then the maximization problem
\begin{equation}
\sum_{i=1}^n \lambda_i \left[X(t_i),X(t_0)\right]_\alpha \, \to \, \underset{\lambda_1,\ldots,\lambda_n}{\max} \label{eq:maxCov}
\end{equation}
obviously does not have a solution. A reasonable side condition is
$$\sigma_{\widehat{X(t_0)}} = \sigma_{X(t_0)}$$
which guarantees $\widehat{X(t_0)} \stackrel{d}{=} X(t_0)$ for $S\alpha S$ random fields $X$. The optimization problem becomes
\begin{equation}
 \begin{cases}
  \left[\widehat{X(t_0)},X(t_0)\right]_\alpha \, \to \, \underset{\lambda_1,\ldots,\lambda_n}{\max}, \\
  \sigma_{\widehat{X(t_0)}} = \sigma_{X(t_0)}.
 \end{cases}\label{MCL}
\end{equation}
The predictor $\widehat{X(t_0)}=\sum_{i=1}^n \lambda_i X(t_i)$ whose weights $\lambda_1,\ldots,\lambda_n$ are a solution of the maximization problem (\ref{MCL}) is called \textit{Maximization of Covariation Linear (MCL) predictor}.

\subsubsection{Properties of the MCL predictor}

The following theorem provides sufficient conditions for the existence, uniqueness, exactness and continuity of the MCL predictor.
\begin{thm}\label{th:MCL}
Assume that the random vector $(X(t_1),\ldots,X(t_n))^\mathsf{T}$ is full-dimensional and $t_0 \in \R^d$. Then the following holds:
\begin{enumerate}
\item The MCL predictor exists.
\item If additionally $\kappa(t_i,t_0) \neq 0$ for some $i\in\{1,\ldots,n\}$, then the MCL predictor at $t_0$ is unique.
\item If the MCL predictor is unique, then it is exact.
\item If $\kappa(t_i,t_0) \neq 0$ for some $i \in \{1,\ldots,n\}$ and $\kappa$ is continuous, then the MCL predictor is continuous in $t_0$.
\end{enumerate}
\end{thm}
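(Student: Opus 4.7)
The plan is to view the maximization problem geometrically as maximizing a linear functional over a sphere in a suitable norm on $\R^n$, and to exploit the strict convexity of that norm together with a Hölder-type bound on covariations.

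For (1) and (2), I first observe that the full-dimensionality of $(X(t_1),\ldots,X(t_n))^\mathsf{T}$ together with Lemma~\ref{lemma:great_subsphere} implies that $f_{t_1},\ldots,f_{t_n}$ are linearly independent in $L^\alpha(E,m)$. Consequently, the map $N:\R^n\to\R_+$ given by
$$N(\lambda)=\Bigl(\int_E\Bigl|\sum_{i=1}^n\lambda_i f_{t_i}(x)\Bigr|^\alpha m(dx)\Bigr)^{1/\alpha}=\sigma_{\widehat{X(t_0)}}$$
is a genuine norm on $\R^n$ (recall $\alpha\in(1,2]$), so the constraint set $K(t_0)=\{\lambda:N(\lambda)=\sigma_{X(t_0)}\}$ is the boundary of a nondegenerate ball, hence compact. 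Existence follows since the linear objective $\lambda\mapsto\sum_i\lambda_i\kappa(t_i,t_0)$ is continuous on the compact set $K(t_0)$. For uniqueness, strict convexity of the $L^\alpha$-norm for $\alpha>1$ passes to $N$, so $K(t_0)$ is the boundary of a strictly convex body; a nonzero linear functional on such a set attains its maximum at exactly one point, and the hypothesis $\kappa(t_i,t_0)\neq 0$ for some $i$ is precisely what ensures the objective is nonzero.

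For (3), the main tool is the Hölder-type inequality
$$[\widehat{X(t_0)},X(t_0)]_\alpha=\int_E f_{\widehat{X(t_0)}}(x)\,f_{t_0}(x)^{<\alpha-1>}m(dx)\le\sigma_{\widehat{X(t_0)}}\,\sigma_{X(t_0)}^{\alpha-1}=\sigma_{X(t_0)}^\alpha,$$
via Hölder with exponents $\alpha$ and $\alpha/(\alpha-1)$ and the constraint $\sigma_{\widehat{X(t_0)}}=\sigma_{X(t_0)}$. When $t_0=t_j$, the choice $\lambda_j=1$, $\lambda_i=0$ for $i\neq j$ satisfies the constraint and achieves the bound since $[X(t_j),X(t_j)]_\alpha=\sigma_{X(t_j)}^\alpha$. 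Full-dimensionality makes weights unique once the predictor is unique (by Lemma~\ref{lemma:lin_dep}), so uniqueness of the MCL predictor forces the weights to equal this exact assignment; hence $\widehat{X(t_0)}=X(t_j)$ a.s.

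For (4), the key idea is to pull the $t_0$-dependence out of the constraint by rescaling. Set $c(t)=\sigma_{X(t)}=\kappa(t,t)^{1/\alpha}$, which is continuous and strictly positive in a neighborhood of $t_0$ by continuity of $\kappa$ and the fact that $\kappa(t_i,t_0)\neq 0$ forces $\sigma_{X(t_0)}>0$ (via the Hölder bound above). Writing $\mu=\lambda/c(t)$, the MCL problem at $t$ becomes
$$\max_{\mu\in S}\;c(t)\sum_{i=1}^n\mu_i\,\kappa(t_i,t),\qquad S:=\{\mu\in\R^n:N(\mu)=1\},$$
where $S$ is a fixed compact set independent of $t$ and the objective depends continuously on $t$ through $\kappa$. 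Uniqueness of the maximizer $\mu^*(t)$ holds near $t_0$ by part (2) applied pointwise, and Berge's maximum theorem then yields continuity of $t\mapsto\mu^*(t)$ at $t_0$. Since $\lambda^*(t)=c(t)\mu^*(t)$ and $c$ is continuous, the weights $\lambda_i(t)$ are continuous at $t_0$, giving continuity of the MCL predictor.

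The most delicate step I anticipate is (4): the constraint set $K(t_0)$ genuinely depends on $t_0$, so a naive application of Berge's theorem fails; the rescaling to the fixed sphere $S$ is essential, and it also requires verifying $c(t_0)>0$, which is where the assumption $\kappa(t_i,t_0)\neq 0$ re-enters beyond its role in uniqueness.
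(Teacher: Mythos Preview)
Your proposal is correct. Parts (1) and (2) follow the same geometric idea as the paper: the constraint set is the boundary of a strictly convex body (the paper spells out the Minkowski equality case explicitly, you invoke strict convexity of $L^\alpha$ directly), and a nonzero linear functional attains its maximum on such a boundary at a single point. The genuine differences are in (3) and (4). For (3), the paper writes down the Lagrange system and verifies that the obvious candidate $\lambda_k=1$, $\lambda_i=0$ ($i\neq k$), $\gamma=-1$ is a critical point, then appeals to uniqueness; your H\"older bound $[\widehat{X(t_0)},X(t_0)]_\alpha\leq\sigma_{X(t_0)}^\alpha$ is more direct, since it shows the candidate is a \emph{global} maximizer outright rather than merely a stationary point. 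For (4), the paper keeps the moving constraint set $S_m=\{\lambda:\Psi(\lambda)\leq\sigma_{X(s_m)}^\alpha\}$ and invokes a parametric stability theorem (Kosmol) after proving Hausdorff convergence $S_m\to S$ by hand; your rescaling $\mu=\lambda/c(t)$ freezes the feasible set to the fixed unit $N$-sphere, so Berge's theorem applies with a constant constraint correspondence, avoiding the set-convergence argument entirely. Your route is shorter and uses more standard tools; the paper's route has the minor advantage of not requiring $\sigma_{X(t_0)}>0$ to be checked separately (their stability theorem tolerates the boundary case), though as you note this follows immediately from the covariation hypothesis via the same H\"older bound.
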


\begin{remark}
We have already seen in Section \ref{sec:COL_properties_MA} that for moving averages, a sufficient condition for the continuity of the covariation function is that the kernel function is continuous and has a compact support.
\end{remark}

\section{Numerical results}

In this section, we apply the extrapolation methods to a sub-Gaussian random field and an $S \alpha S$ L\'evy motion on the square $[0,1]^2$. For this, we generate 1000 realizations of both random fields on an equidistant grid of $100\times 100$ points and extrapolate each realization by using the values $x(t_1),\ldots,x(t_9)$ at the locations $t_1=(0.2,0.2)^\mathsf{T}$, $t_2=(0.2,0.5)^\mathsf{T}$, $t_3=(0.2,0.8)^\mathsf{T}$, $t_4=(0.5,0.2)^\mathsf{T}$, $t_5=(0.5,0.5)^\mathsf{T}$, $t_6=(0.5,0.8)^\mathsf{T}$, $t_7=(0.8,0.2)^\mathsf{T}$, $t_8=(0.8,0.5)^\mathsf{T}$ and $t_9=(0.8,0.8)^\mathsf{T}$.

The sub-Gaussian random field $X=\{A^{1/2}G(t),t \in [0,1]^2\}$ has the stability index $\alpha=1.5$, where $G$ is a stationary Gaussian random field with Gaussian covariance function $C(h) = 7\exp\{-(h/0.1)^2\}$, $h \geq 0$. Recall that in this case, the LSL, COL and ML predictors coincide by Theorem~\ref{th:ML_LSL_COL}. Due to the specific structure of sub-Gaussian random fields, we also provide a conditional simulation (CS) of $X$, i.~e. a simulation of $X$ that honours the conditions $X(t_i)=x(t_i)$ for $i=1,\ldots,9$, see \cite{Lan02}. The conditional simulation can be seen as another extrapolation method and carried out straightforward by conditionally simulating the Gaussian part $G$ of $X$ and scaling it appropriately with a realization of $A^{1/2}$. A conditional simulation algorithm for Gaussian random fields is given in \cite{Lan02}. For a simulation algorithm for $A$, see \cite{CMS76}. Figure \ref{fig:subGaussian} shows a realization of $X$ and the extrapolations based on the LSL (COL, ML), MCL and CS methods.

\begin{figure}[ht!]
\includegraphics[width=7cm]{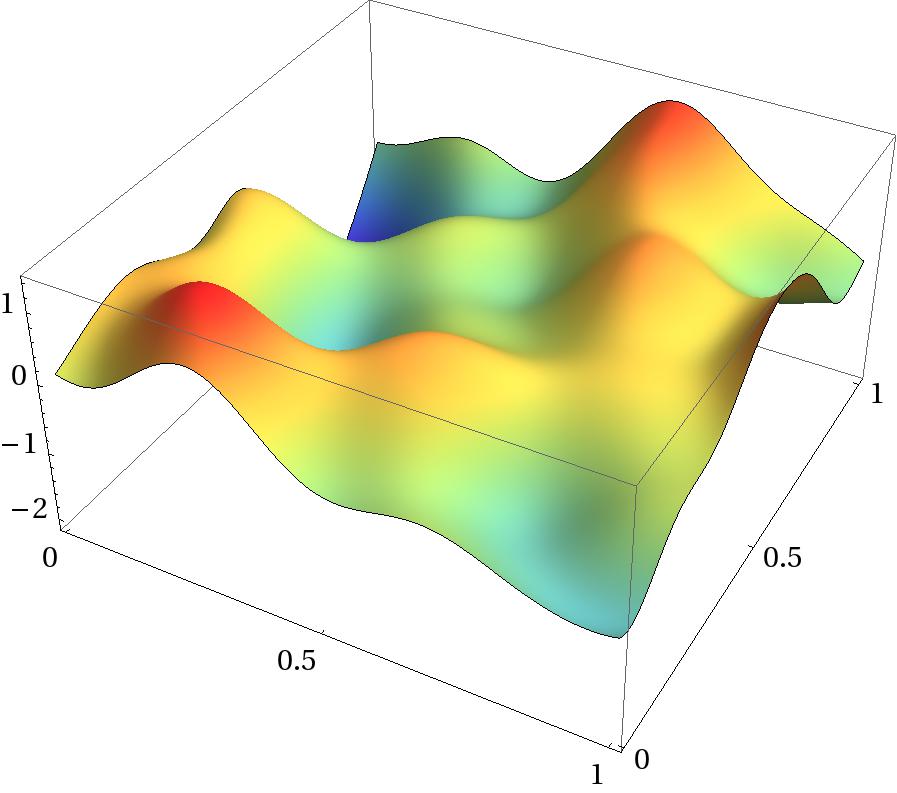}\hspace*{0.5cm} \includegraphics[width=7cm]{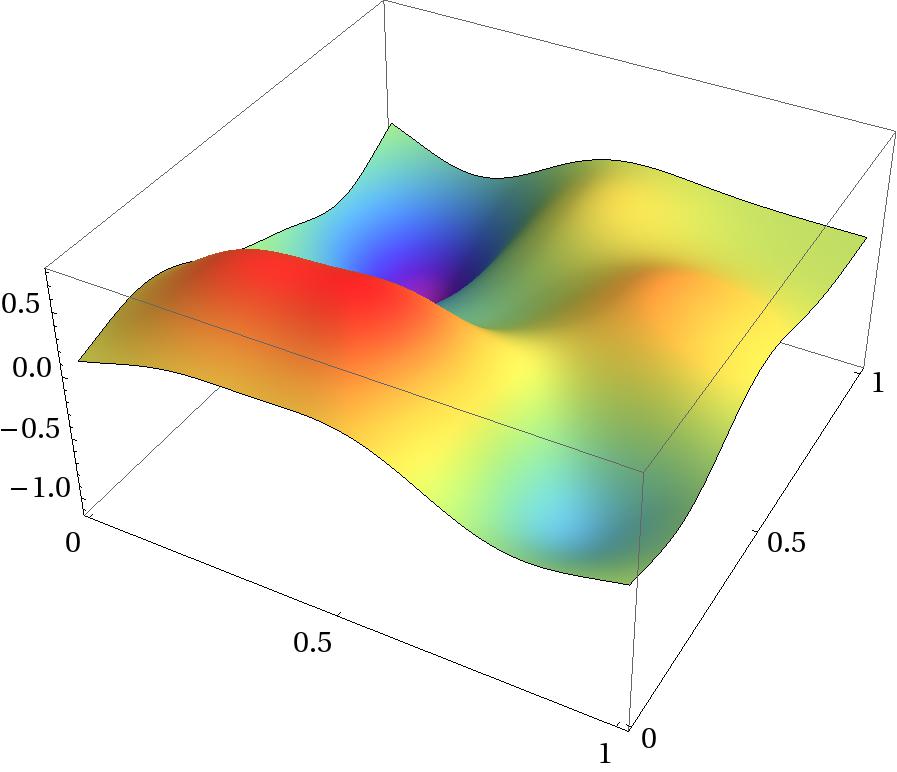} \\
\includegraphics[width=7cm]{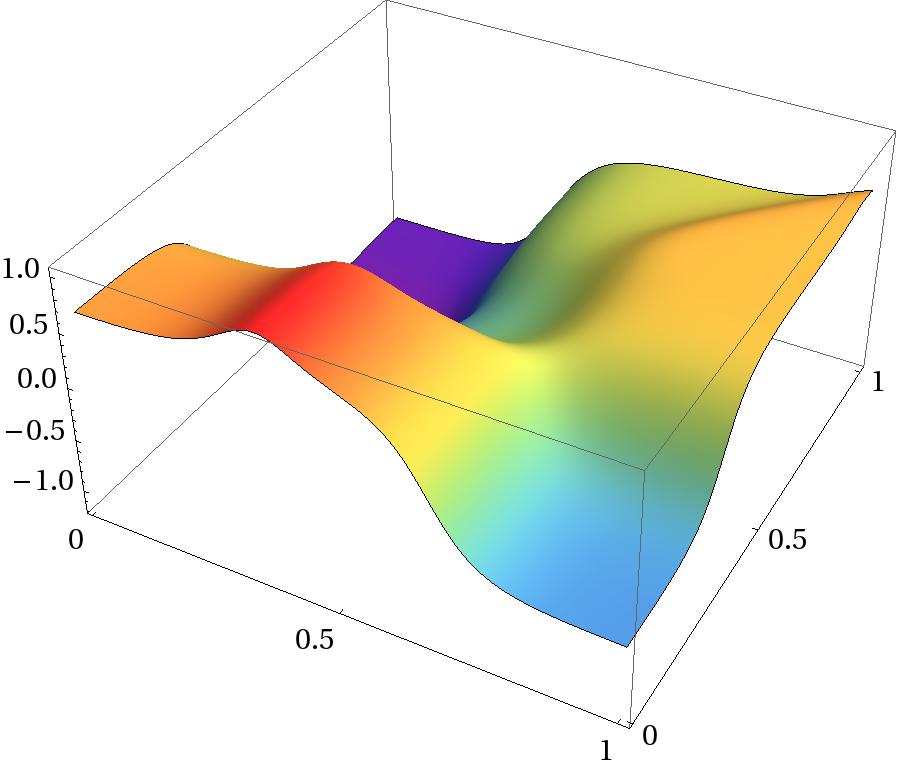}\hspace*{0.5cm} \includegraphics[width=7cm]{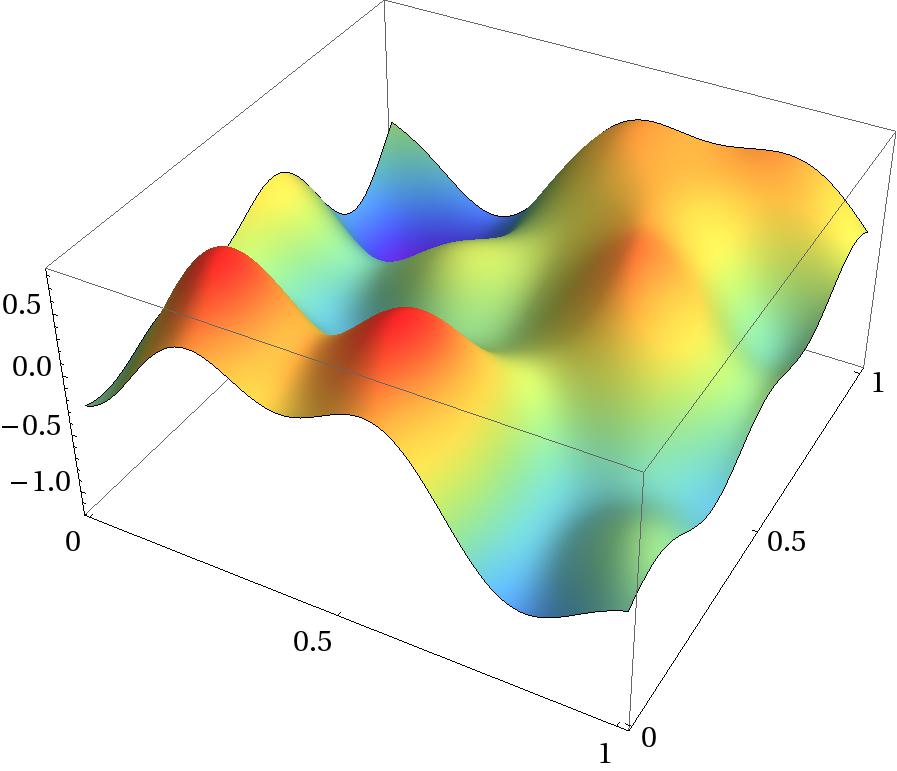} \\
\caption{Realization of the sub-Gaussian random field (top left) and the extrapolations based on the LSL (COL, ML) method (top right), the MCL method (bottom left) and the CS method (bottom right).}
\label{fig:subGaussian}
\end{figure}

\begin{table}[htbp]
	\newcolumntype{d}[1]{D{.}{.}{#1}}
	\setlength{\tabcolsep}{10pt}
	\footnotesize
	\vspace{1.5mm}
	\caption{Summary statistics for the deviations $X(g)-\widehat{X(g)}$ (sub-Gaussian random field)}
	    \begin{center}
        \begin{tabular}{lcccccc}
	    \toprule
		Method & 5\%-Quantile & 1st Quartile & Median & Mean & 3rd Quartile & 95\%-Quantile \\
	    \midrule
		LSL (COL, ML) & -1.5451 & -0.4446 & 0.0018 & 0.0070 & 0.4503 & 1.5363 \\
		MCL & -1.8204 & -0.4899 & 0.0046 & -0.0188 & 0.5016 & 1.7580 \\
		CS & -2.7523 & -0.5837 & 0.0058 & 0.0057 & 0.5985 & 2.7262 \\
        \bottomrule
        \end{tabular}\\
        \end{center}
	\label{tab:subGaussian}
\end{table}

In order to assess the performance of the predictors, we calculated the deviations $X(g)-\widehat{X(g)}$ of the extrapolated values from the simulated ones at each grid point $g$ of the 1000 realizations of the sub-Gaussian random field. The summary statistics of the deviations are shown in Table~\ref{tab:subGaussian}. As one can see, there are only marginal differences between the quantiles and the mean of the deviations for the three methods, the LSL (COL, ML) and MCL method performing slightly better than the CS method.

Consider now the two-dimensional $S\alpha S$ L\'evy motion $Y$ with stablility index $\alpha=1.5$ on the square $[0,1]^2$ defined by $Y(t) = \int_{[0,1]^2} \ind\{x_1\leq t_1,x_2 \leq t_2\} M(dx)$ for $t \in [0,1]^2$, where $M$ is an $S \alpha S$ random measure with Lebesgue control measure. Figure \ref{fig:LevyMotion} shows a realization of $X$ and the extrapolations based on the LSL, COL and MCL methods.

\begin{figure}[ht!]
\includegraphics[width=7cm]{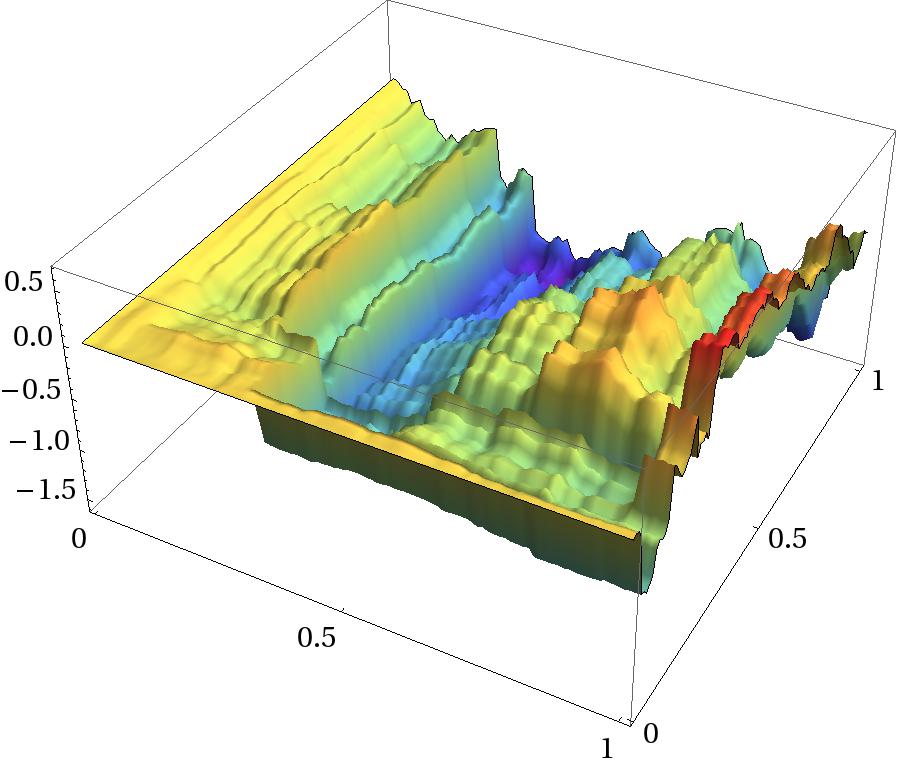}\hspace*{0.5cm} \includegraphics[width=7cm]{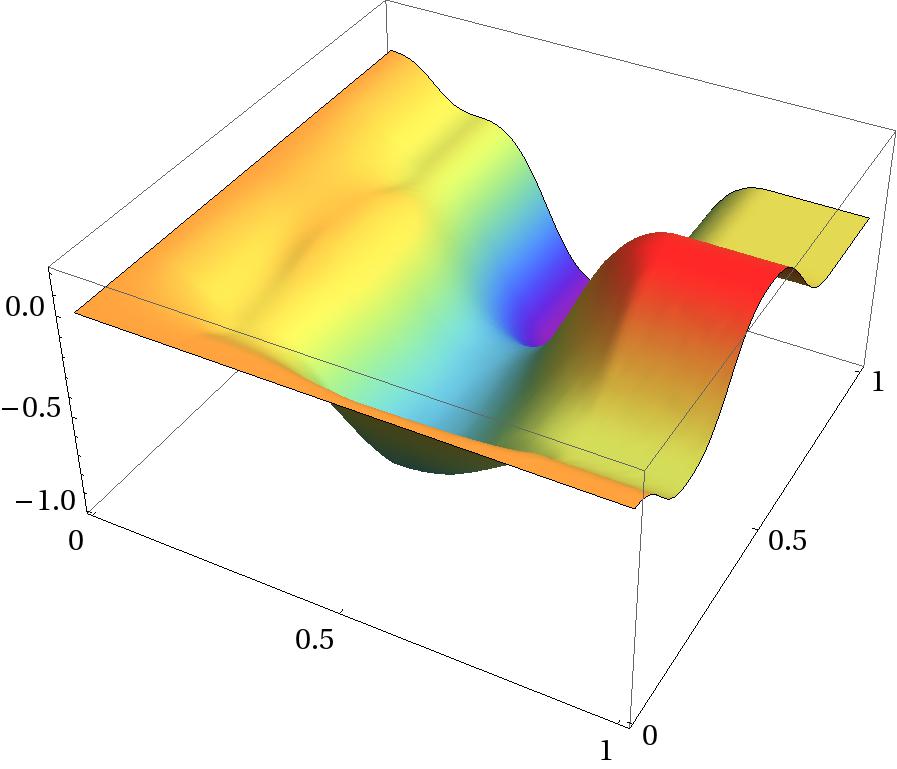} \\
\includegraphics[width=7cm]{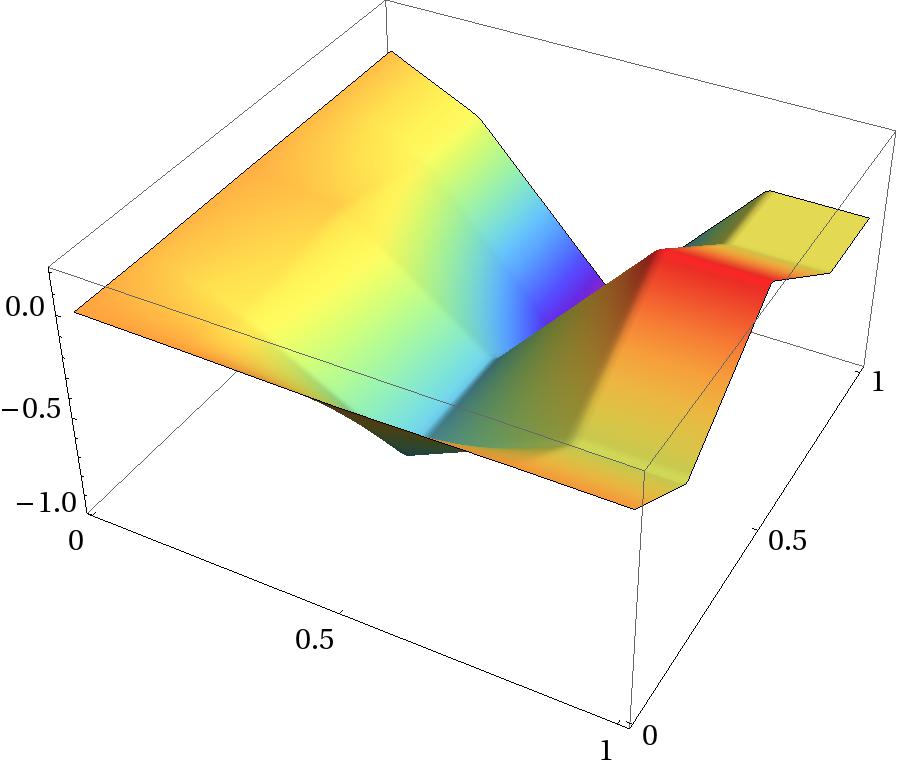}\hspace*{0.5cm} \includegraphics[width=7cm]{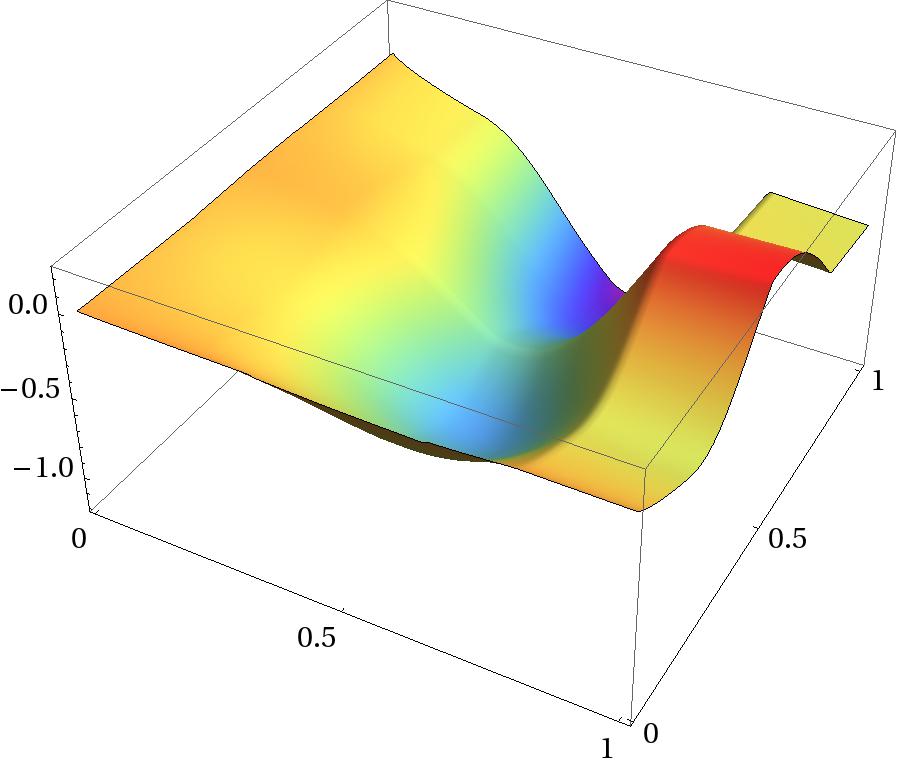} \\
\caption{Realization of the L\'evy stable motion (top left) and the extrapolations based on the LSL method (top right), the COL method (bottom left) and the MCL method (bottom right).}
\label{fig:LevyMotion}
\end{figure}

\begin{table}[ht!]
	\newcolumntype{d}[1]{D{.}{.}{#1}}
	\setlength{\tabcolsep}{10pt}
	\footnotesize
	\vspace{1.5mm}
	\caption{Summary statistics for the deviations $X(g)-\widehat{X(g)}$ ($S\alpha S$ L\'evy motion)}
	    \begin{center}
        \begin{tabular}{lcccccc}
	    \toprule
		Method & 5\%-Quantile & 1st Quartile & Median & Mean & 3rd Quartile & 95\%-Quantile \\
	    \midrule	    
		LSL & -0.5170 & -0.1246 & 0.0000 & -0.0071 & 0.1226 & 0.5045 \\
		COL & -0.5263 & -0.1289 & 0.0002 & -0.0061 & 0.1266 & 0.5137 \\
		MCL & -0.6093 & -0.1455 & -0.0007 & 0.0283 & 0.1407 & 0.5895 \\
        \bottomrule
        \end{tabular}\\
        \end{center}
	\label{tab:LevyMotion}
\end{table}

Again, we investigate the performance of the extrapolation methods by calculating the summary statistics of the deviations of the extrapolated values from the simulated ones. Table \ref{tab:LevyMotion} shows the corresponding results. As for the sub-Gaussian random field $X$, no method clearly outperforms the others.

\section{Discussion and open problems}

For sub-Gaussian random fields, the LSL and COL methods can be efficiently implemented and easily applied to larger data sets of at least $1000$ sample points since efficient algorithms to solve linear systems of equations are available. For other kinds of $\alpha$-stable random fields, the COL predictor can still be implemented efficiently for large data sets since only a system of linear equations has to be solved. In the LSL and MCL methods, a function whose dimension equals the number of sample points needs to be numerically minimized and maximized, respectively, impeding the application of these two methods to large data sets.

We notice that we have proven the properties of the LSL, COL and MCL predictors for $\alpha$-stable random fields with integral representation (\ref{eq:integral_representation}) and $\alpha>1$. All of these methods do not involve the skewness of the random field $X$. If $X$ is $S\alpha S$, then obviously so is $\hat{X}$ as it is a linear combination of $S\alpha S$ random variables. If the skewness function $\beta$ of $X$ is not zero, then easy calculations show that the skewness parameter of $\hat{X}(t) = \sum_{i=1}^n \lambda_i X(t_i)$ is given by
$$\frac{\int_E \left(\sum_{i=1}^n \lambda_i f_{t_i}(x)\right)^{\langle\alpha\rangle} \beta(x) m(dx)}{\int_E \left\vert\sum_{i=1}^n \lambda_i f_{t_i}(x)\right\vert^\alpha m(dx)},$$
cf.~\cite[Property~3.2.2]{ST94}.

The ML and CS predictors can be immediately extended to the case $\alpha \leq 1$, but they are only available for Gaussian and sub-Gaussian random fields. Also, the minimization problem (\ref{eq:LSL}) in the LSL method can be considered for $\alpha \leq 1$, but except for the case $\alpha=1$, the objective function will change from convexity to concavity in $\lambda=(\lambda_1,\ldots,\lambda_n)^\mathsf{T}$ and it is not clear whether the properties of the LSL predictor still hold. The development of extrapolation methods for $\alpha \leq 1$ is thus an interesting problem for future research.

Another interesting open problem is a characterization of the covariation function which has been touched upon only slightly in Lemma~\ref{lemma:covariation_function}.

\section*{Acknowledgement}

The authors are grateful to Ilya Molchanov for inspiring discussions on multivariate stable laws.

\appendix

\section{Proofs}\label{Proofs}

\begin{proof} [Proof of Lemma~\ref{lemma:lin_dep}]
Let $supp(\Gamma)$ denote the support of $\Gamma$ and assume that $supp(\Gamma) \subset \mathbb{S}^{d-1} \cap G_{c_1,\ldots,c_d}$ for some $(d-1)$-dimensional subspace
$G_{c_1,\ldots,c_d} := \{(x_1,\ldots,x_{d})^\mathsf{T} \in \R^{d}: c_1 x_1 + \ldots + c_{d} x_{d} = 0\}$
with $c_1,\ldots,c_{d} \in \R$. Let us consider $\sum_{i=1}^d c_i X_i$. Using \cite[Example~2.3.4]{ST94}, we obtain
\begin{eqnarray*}
\sigma_{\sum_{i=1}^d c_i X_i}^\alpha = \int_{\Sd} \left\vert \sum_{i=1}^d c_i s_i \right\vert^\alpha \Gamma(ds)= \int_{\Sd \cap G_{c_1,\ldots,c_d}} \left\vert \sum_{i=1}^d c_i s_i \right\vert^\alpha \Gamma(ds) = 0
\end{eqnarray*}
Since $\boldsymbol{\mu}=0$, we have $\sum_{i=1}^d c_i X_i = 0$ almost surely.

Vice versa, if $\sum_{i=1}^d c_i X_i = 0$ almost surely, then
$
\sigma_{\sum_{i=1}^d c_i X_i}^\alpha = \int_{\Sd} \left\vert \sum_{i=1}^d c_i s_i \right\vert^\alpha \Gamma(ds) = 0
$. Thus $\Gamma$ is concentrated on the great sub-sphere $G_{c_1,\ldots,c_d}:=\{(s_1,\ldots,s_d)^\mathsf{T} \in \Sd: \sum_{i=1}^d c_i s_i=0\}$ of $\Sd$.
\end{proof}

\begin{proof}[Proof of Lemma~\ref{lemma:association_kernels}]
Let $E_+:= \{x \in E: f_1(x)^2+f_2(x)^2 > 0\}$ and $g_j:E \to \R$ be defined by
$$g_j(x):= \frac{f_j(x)}{(f_1(x)^2+f_2(x)^2)^{1/2}},\quad x \in E, \quad j=1,2.$$
Then we have for any Borel set $A \subset \mathbb{S}^1$
$$\Gamma(A) = \int_{g^{-1}(A)} \frac{1+\beta(x)}{2}\left(f_1(x)^2+f_2(x)^2\right)^{\alpha/2}m(dx) + \int_{g^{-1}(-A)} \frac{1-\beta(x)}{2}\left(f_1(x)^2+f_2(x)^2\right)^{\alpha/2}m(dx),$$
where $g^{-1}(A):=\{x \in E_+:(g_1(x),g_2(x))^\mathsf{T} \in A\}$, see~\cite[pp.~115]{ST94}. Let $S^-=\{(s_1,s_2)^\mathsf{T}\in \mathbb{S}^1: s_1s_2<0\}$. It holds $S^-=-S^-$. The random vector $(X,Y)^\mathsf{T}$ is associated if and only if
$$\Gamma(S^-)=\int_{g^{-1}(S^-)} \left(f_1(x)^2+f_2(x)^2\right)^{\alpha/2}m(dx) = 0,$$
see~\cite[Theorem~4.6.1]{ST94}. This is equivalent to $m(g^{-1}(S^-))=0$, that is $f_1f_2\geq 0$ $m$-almost everywhere. The proof for negative association is analogous by using \cite[Theorem~4.6.3]{ST94}.
\end{proof}

\begin{proof}[Proof of Corollary~\ref{corollary:decomposition}]
By~\cite[Theorem~3.5.6]{ST94}, $(X,Y)^\mathsf{T}$ has an integral representation
\begin{equation*}
(X,Y)^\mathsf{T} \stackrel{d}{=} \left(\int_E f_1(x)M(dx), \int_E f_2(x) M(dx)\right)^\mathsf{T},
\end{equation*}
where $M$ is an $\alpha$-stable random measure with control measure $m$ and skewness intensity function $\beta$. We define $E^*:= \{x \in E: f_1(x)f_2(x) \geq 0\}$, $X_1:=\int_{E} f_1(x)\ind_{E^*}(x)M(dx)$, $Y_1:=\int_{E} f_2(x)\ind_{E^*}(x)M(dx)$, $X_2:=X-X_1$ and $Y_2:=Y-Y_1$. The dependence properties of $X_1$, $X_2$, $Y_1$ and $Y_2$ follow immediately from Lemma~\ref{lemma:association_kernels} and the fact that an $\alpha$-stable random vector $(U,V)^\mathsf{T} = \left(\int_E g_1(x)M(dx), \int_E g_2(x) M(dx)\right)^\mathsf{T}$ has independent components if and only if $g_1g_2 = 0$ $m$-almost everywhere, see~\cite[Theorem~3.5.3]{ST94}..
\end{proof}

\begin{proof}[Proof of Lemma~\ref{lemma:mixed_moments}]
By~\cite[Property~1.2.17]{ST94}, we have $\E |\lambda X+Y|^p= \E |\xi|^p \sigma_{\lambda X+Y}^p$ for $\lambda\in \R$,
where $\xi\sim S_{\alpha}(1, \beta_{\lambda X+Y}, 0)$ and $\beta_{\lambda X+Y}={\int_{\mathbb{S}^1} (\lambda s_1+s_2)^{<\alpha>}\Gamma(ds)}\left/{\int_{\mathbb{S}^1} |\lambda s_1+s_2|^{\alpha}\Gamma(ds)}\right.$. Taking derivatives on both sides, we get
\begin{eqnarray}
\left.\frac{d}{d\lambda} \E |\lambda X+Y|^p\right\vert_{\lambda=0}&=&p\E \left(X Y^{<p-1>}\right), \nonumber\\
\left.\frac{d}{d \lambda}\E |\xi|^p \sigma_{\lambda X+Y}^p \right\vert_{\lambda=0}&=&p \sigma_Y^{p-\alpha}[X,Y]_{\alpha} \left.\E |\xi|^p\right\vert_{\lambda=0}+  \sigma_Y^{p} \left.\frac{d}{d\lambda}\E |\xi|^p\right\vert_{\lambda=0}. \label{eq:moment_derivative}
\end{eqnarray}
A closed formula for $\E |\xi|^p$ is given in \cite[p.~18]{ST94}. After some cumbersome calculations, we obtain 
\begin{eqnarray*}
\left.\E |\xi|^p\right\vert_{\lambda=0} &=& \E |Y|^p/\sigma^p_Y, \\
\left.\frac{d}{d\lambda}\E |\xi|^p\right\vert_{\lambda=0}&=& p \frac{\E |Y|^p}{\sigma^{p+\alpha}_Y} \cdot c\cdot \left((X,Y)_{\alpha}- \beta_Y[X,Y]_{\alpha}\right).
\end{eqnarray*}
Plugging these formulas in (\ref{eq:moment_derivative}) and equating both derivatives yields the assertion.
\end{proof}

\subsection{Proofs for the LSL predictor}
\begin{proof}[Proof of Lemma \ref{lemma:first_order_conditions}]
 Let $\lambda_0 := -1$. Then we get for $j=1,\ldots,n$
 \begin{eqnarray*}
  \sigma_{\widehat{X(t_0)}-X(t_0)}^\alpha = \sigma_{\sum_{i=0}^n \lambda_i X(t_j)}^\alpha 
  = \int_{\mathbb{S}^n} \left\vert \sum_{i=0}^n \lambda_i s_i \right\vert^\alpha \Gamma(ds),
 \end{eqnarray*}
 where $\Gamma$ is the spectral measure of the random vector $(X(t_0),\ldots,X(t_n))^\mathsf{T}$, cf.~\cite[formula~(2.3.3)]{ST94}. It is easy to see that for $j=1,\ldots,n$, the partial derivative
$
 \frac{d}{d\lambda_j}\left\vert\sum_{i=0}^n \lambda_i s_i\right\vert^\alpha
$
(as a function of $\lambda_j$) is uniformly bounded in $(s_0,\ldots,s_n)^\mathsf{T} \in \mathbb{S}^n$ if $\alpha > 1$. Then since $\Gamma$ is a finite measure, it follows from the dominated convergence theorem that
 \begin{eqnarray*}
 &&\frac{\partial}{\partial \lambda_j} \sigma_{\widehat{X(t_0)}-X(t_0)}^\alpha   = \int_{\mathbb{S}^n} \frac{\partial}{\partial \lambda_j} \left\vert \sum_{i=0}^n \lambda_i s_i \right\vert^\alpha \Gamma(ds) \\
  &=&\alpha \int_{\mathbb{S}^n} s_j \left( \sum_{i=0}^n \lambda_i s_i \right)^{< \alpha-1>} \Gamma(ds) 
  = \alpha \left[X(t_j),\sum_{i=1}^n\lambda_i X(t_i) - X(t_0) \right]_\alpha,
 \end{eqnarray*}
 see~\cite[Lemma~2.7.5]{ST94}. 
\end{proof}

\begin{proof}[Proof of Theorem \ref{th:LSLP1}]
Assertion 1 follows from \cite[Theorem~1.1, p.~59]{DeVL93}, the discussion thereafter and Lemma~\ref{lemma:great_subsphere}. The exactness of the predictor is obvious. We now prove the second assertion. Since $X$ is stochastically continuous, we have by (\ref{eq:integral_scale}) and (\ref{eq:stochastic_convergence})
\begin{equation*}
\Vert f_s - f_t \Vert_{L^\alpha(E,m)} \to 0, \quad s \to t,
\end{equation*}
where $\Vert \cdot \Vert_{L^\alpha(E,m)}$ denotes the usual norm of $L^\alpha(E,m)$. Let $\hat{f}_t = \sum_{i=1}^n \lambda_i(t) f_{t_i}$ for each $t \in \R^d$. Then it holds
\begin{equation}
\left\Vert \hat{f}_s - \hat{f}_t\right\Vert_{L^\alpha(E,m)} = \left\Vert \sum_{i=1}^n (\lambda_i(s)-\lambda_i(t)) f_{t_i}\right\Vert_{L^\alpha(E,m)} \to 0, \quad s \to t, \label{eq:continuity_bestApproximation}
\end{equation}
see \cite[Theorem~1.2, p.~60]{DeVL93}.

Assume that $\lambda_i(s)\nrightarrow \lambda_i(t)$ as $s \to t$ for some $i \in \{1,\ldots,n\}$. Then $\sqrt{(\lambda_1(s)-\lambda_1(t))^2+\cdots+(\lambda_n(s)-\lambda_n(t))^2} \nrightarrow 0$ as $s \to t$. Since
\begin{equation*}
\left\vert \frac{\lambda_i(s)-\lambda_i(t)}{\sqrt{(\lambda_1(s)-\lambda_1(t))^2+\cdots+(\lambda_n(s)-\lambda_n(t))^2}} \right\vert \leq 1, \quad i=1,\ldots,n,
\end{equation*}
there exists a sequence $\{s_{k}\}_{k \in \N}$ with $s_k \to t$ as $k \to \infty$ such that
\begin{equation*}
\gamma_i^{(k)} := \frac{\lambda_i(s_{k})-\lambda_i(t)}{\sqrt{(\lambda_1(s_{k})-\lambda_1(t))^2+\cdots+(\lambda_n(s_{k})-\lambda_n(t))^2}} \to \gamma_i, \quad k \to \infty, \quad i=1,\ldots,n.
\end{equation*}
Notice that $(\gamma_1^{(k)})^2 = 1 - \sum_{i=2}^n (\gamma_i^{(k)})^2$ for each $k \in \N$ such that $\gamma_i \neq 0$ for some $i \in \{1,\ldots,n\}$. By (\ref{eq:continuity_bestApproximation}), we have $\Vert \sum_{i=1}^n \gamma_i f_{t_i} \Vert_{L^\alpha(E,m)}=0$ which implies $\sum_{i=1}^n \gamma_i f_{t_i} = 0$ $m$-almost everywhere. By Lemma~\ref{lemma:great_subsphere}, this is a contradiction to the the full-dimensionality of $(X(t_1),\ldots,X(t_n))^\mathsf{T}$.
\end{proof}

\subsection{Proofs for the COL predictor}

\begin{proof}[Proof of Theorem \ref{th:COL_continuity}]
Let $t_0 \in \R^d$. The matrix on the left hand side of equation (\ref{eq:soc3}) is a bijective continuous linear operator. By the open mapping theorem, the inverse operator is continuous. Since the covariance function is continuous, the weights $\lambda_1,\ldots,\lambda_n$ (considered as real-valued functions on $\R^d$) are continuous in $t_0$.
\end{proof}

\begin{proof}[Proof of Theorem \ref{th:ML_LSL_COL}]
Let $A$ be a lower triangular $(n+1)\times (n+1)$-matrix with the property
\begin{equation}
A \Omega_{t_1,\ldots,t_n,t_0} A^\mathsf{T} = 2I, \label{eq:A}
\end{equation}
where $I$ denotes the identity matrix and $\Omega_{t_1,\ldots,t_n,t_0}$ is the covariance matrix of the Gaussian part of the random vector $(X(t_1),\ldots,X(t_n),X(t_0))^\mathsf{T}$. Then the ML predictor for $X(t_0)$ is given by
\begin{equation}
\widehat{X(t_0)} = \begin{cases}
					\sum_{i=1}^n \frac{-a_{n+1,i}}{a_{n+1,n+1}} X(t_i), & t_0 \neq t_i \text{ for all }  i \in \{1,\ldots,n\}, \\
					X(t_i), & t_0 = t_i \text{ for some } i \in \{1,\ldots,n\}.
				\end{cases} \label{eq:ML_elliptical}
\end{equation}
where $a_{i,j}$ denotes the entry in the $i$-th row and $j$-th column of $A$, see \cite{Pai98}.
Let $C$ be the covariance function of the Gaussian part $G$ of the sub-Gaussian random field $X$. We have $\Omega_{t_1,\ldots,t_n,t_0} = 2(A^\mathsf{T} A)^{-1}$ and $A=B^{-1}$, where $B$ is the lower triangular matrix of the Cholesky decomposition of $\Omega_{t_1,\ldots,t_n,t_0}$, i.~e. $\Omega_{t_1,\ldots,t_n,t_0}=2 BB^\mathsf{T}$. Notice that the last column of the matrix $-1/a_{n+1,n+1}\cdot A^\mathsf{T}$ consists of the coefficients $-a_{n+1,1}/a_{n+1,n+1},\ldots,-a_{n+1,n}/a_{n+1,n+1}$ of the ML predictor and the number $-1$. Let $\lambda_i = -a_{n+1,i}/a_{n+1,n+1}$ for $i=1,\ldots,n$. We have
\begin{eqnarray*}
&&\frac{1}{2a_{n+1,n+1}}\Omega_{t_1,\ldots,t_n,t_0} A^\mathsf{T} = \frac{1}{a_{n+1,n+1}}(A^\mathsf{T}A)^{-1}A^\mathsf{T} = \frac{1}{a_{n+1,n+1}} A^{-1} (A^\mathsf{T})^{-1} A^\mathsf{T} \\
 &=& \frac{1}{a_{n+1,n+1}}A^{-1} = \frac{1}{a_{n+1,n+1}}B = \begin{pmatrix}
 x_{1,1} & 0 & \cdots & 0 \\
 \vdots & \ddots & \ddots & \vdots \\
 x_{n,1} &\cdots & x_{n,n}  & 0 \\
 x_{n+1,1} & \cdots & x_{n+1,n} & x_{n+1,n+1}
 \end{pmatrix}
\end{eqnarray*}
for some real numbers $x_{1,1},\ldots,x_{n+1,n+1}$. By only considering the last column of the left and right hand side of the last equation yields
\begin{equation*}
 \begin{pmatrix}
 C(0) & \cdots & C(t_n-t_1) \\
 \vdots & \ddots & \vdots \\
 C(t_n-t_1) & \cdots & C(0)\end{pmatrix} \begin{pmatrix} 
		\lambda_1 \\
		\vdots \\
		\lambda_n
	      \end{pmatrix} - \begin{pmatrix}
				C(t_0-t_1) \\
				\vdots \\
				C(t_0-t_n)
\end{pmatrix} = \begin{pmatrix}
				0 \\
				\vdots \\
				0
\end{pmatrix}
\end{equation*}
which is the system of linear equations that has to be solved for the COL predictor. Since the COL predictor is unique, it must be equal to the ML predictor.

We now show that the COL predictor is equal to the LSL predictor. Let $X = \{A^{1/2} G(t), t \in \R^d\}$ be a sub-Gaussian random field. We have
$
\widehat{X(t_0)} - X(t_0) = A^{1/2} \sum_{i=0}^n \lambda_i G(t_i),
$
with $\lambda_0 = -1$ and
$$\Var\left(\sum_{i=0}^n \lambda_i G(t_i)\right) = \sum_{i,j=0}^n \lambda_i\lambda_j C(t_i-t_j).$$
With \cite[Proposition~1.3.1]{ST94}, we get
$$\sigma_{\widehat{X(t_0)}-X(t_0)}^\alpha = \left( \frac{1}{2}\sum_{i,j=0}^n \lambda_i\lambda_j C(t_i-t_j)\right)^{\alpha/2} \, \to \, \min\limits_{\lambda_1,\ldots,\lambda_n}$$
which is equivalent to
$$\sum_{i,j=0}^n \lambda_i\lambda_j C(t_i-t_j) \, \to \, \min\limits_{\lambda_1,\ldots,\lambda_n}.$$
Taking partial derivatives and setting them equal to zero shows that the LSL predictor is equal to the COL predictor. The calculations for Gaussian random fields are analogous.
\end{proof}

\begin{proof}[Proof of Lemma \ref{lemma:covariation_function}]
Let $n \in \N$, $z_1,\ldots,z_n \in \R$ and $t_1,\ldots,t_n \in \R^d$. It holds
\begin{eqnarray*}
&&\sum_{i,j=1}^n \kappa(t_i-t_j) z_i z_j = \left[\sum_{i,j=1}^n X(t_i-t_j)z_i z_j,X(0)\right]_\alpha \\
&=& \int_{\R^d} \sum_{i,j=1}^n f(t_i-t_j-x) z_i z_j f^{\langle \alpha -1\rangle}(-x) dx \geq 0
\end{eqnarray*}
if $f$ is positive semi-definite and $f(x) \geq 0$ for all $x \in \R^d$. We consider $(z_1,\ldots,z_n)^\mathsf{T} \neq (0,\ldots,0)^\mathsf{T}$ for the positive definiteness of $\kappa$. Let $A$ be the set where $f$ is positive with positive Lebesgue measure. Then $-x \in -A$ if and only if $x \in A$ and
\begin{eqnarray*}
\sum_{i,j=1}^n \kappa(t_i-t_j) z_i z_j \geq \int_{-A} \sum_{i,j=1}^n  f(t_i-t_j-x) z_i z_j f^{\langle \alpha -1\rangle}(-x) dx > 0.
\end{eqnarray*}
\end{proof}

\subsection{Proofs for the MCL predictor}

\begin{proof}[Proof of Theorem \ref{th:MCL}]
\begin{enumerate}
\item Let $C:= \{ (\lambda_1,\ldots,\lambda_n)^\mathsf{T} \in \R^n: \sigma_{\sum_{i=1}^n \lambda_i X(t_i)} = \sigma_{X(t_0)}\}$. Let $\lambda_2=\ldots=\lambda_n=0$. Then
$\sigma_{\sum_{i=1}^n \lambda_i X(t_i)} = \sigma_{\lambda_1 X(t_1)} = \vert \lambda_1\vert \sigma_{X(t_1)}$
such that $\lambda_1 = \pm \sigma_{X(t_0)}/\sigma_{X(t_1)}$. Therefore, the set $C$ is non-empty. Since $L^\alpha(E,m)$ is a linear space, $\sigma_{\sum_{i=1}^n \lambda_i X(t_i)}$ is a continuous function in $\lambda=(\lambda_1,\ldots,\lambda_n)^\mathsf{T} \in \R^n$ by Lebesgue's theorem on dominated convergence. Thus the set $C$ is closed. By Lemma~\ref{lemma:great_subsphere}, the functions $f_{t_i}$, $i=1,\ldots,n$, are linearly independent. Due to this, $\sigma_{\sum_{i=1}^n \lambda_i X(t_i)}=\int_E\vert\sum_{i=1}^n\lambda_if_{t_i}(x)\vert^\alpha m(dx)$ tends to infinity as $\Vert \lambda \Vert_2 \to \infty$ so that $C$ is bounded and hence compact. Since the objective function is continuous, there exists a solution of the optimization problem.

\item Recall the optimization problem
\begin{equation}
 \begin{cases}
  \left[\widehat{X(t_0)},X(t_0)\right]_\alpha \, \to \, \underset{\lambda_1,\ldots,\lambda_n}{\max}, \\
  \sigma_{\widehat{X(t_0)}} = \sigma_{X(t_0)}.
 \end{cases} \label{eq:op}
\end{equation}
Define the function $\Psi: \R^n \to \R$ by
$$\Psi(\lambda) := \sigma_{\widehat{X(t_0)}} = \left\Vert\sum_{i=1}^n \lambda_i f_{t_i} \right\Vert_\alpha, \quad \lambda = (\lambda_1,\ldots,\lambda_n)^\mathsf{T} \in \R^n.$$
It is obvious that $\Psi$ is continuous. By Minkowski's inequality, $\Psi$ is convex and strictly convex on the set of admissible points given by $\Lambda:=\{\lambda=(\lambda_1,\ldots,\lambda_n)^\mathsf{T} \in \R^n: \Psi(\lambda) = \sigma_{X(t_0)}\}$, that is
$$\Psi(\beta \lambda^{(1)} + (1-\beta) \lambda^{(2)}) < \beta \Psi(\lambda^{(1)}) + (1-\beta) \Psi(\lambda^{(2)})$$
for all $\beta \in (0,1)$ and $\lambda^{(1)},\lambda^{(2)} \in \Lambda$ with $\lambda^{(1)} \neq \lambda^{(2)}$.

Indeed, let $\lambda^{(1)},\lambda^{(2)} \in \Lambda$ with $\lambda^{(1)} \neq \lambda^{(2)}$ and $\beta \in (0,1)$. We have
\begin{equation*}
\Psi(\beta \lambda^{(1)} + (1-\beta) \lambda^{(2)}) = \left\Vert \beta \sum_{i=1}^n \lambda_i^{(1)} f_{t_i} + (1-\beta) \sum_{i=1}^n \lambda_i^{(2)} f_{t_i} \right\Vert_\alpha.
\end{equation*}
Assume that
\begin{equation}
\beta \sum_{i=1}^n \lambda_i^{(1)} f_{t_i}(x) = \gamma (1-\beta) \sum_{i=1}^n \lambda_i^{(2)} f_{t_i}(x) \quad m-a.e. \label{eq:minkowski}
\end{equation}
for some $\gamma \geq 0$. By Lemma~\ref{lemma:great_subsphere}, this contradicts the full-dimensionality of the vector $(X(t_1),\ldots,X(t_n))^\mathsf{T}$. Analogously, by exchanging $\beta$ and $1-\beta$, we have
\begin{equation*}
(1-\beta) \sum_{i=1}^n \lambda_i^{(2)} f_{t_i}(x) \neq \gamma \beta \sum_{i=1}^n \lambda_i^{(1)} f_{t_i}(x) \quad m-a.e.
\end{equation*}
for all $\gamma \geq 0$. The Minkowski inequality yields
$\Vert g + h \Vert_\alpha \leq \Vert g \Vert_\alpha + \Vert h \Vert_\alpha$
for all $g,h \in L^\alpha(E,m)$ and equality holds if and only if $g=\gamma h$ or $h = \gamma g$ $m$-almost everywhere for some $\gamma \geq 0$. We conclude that
\begin{eqnarray*}
&&\Psi(\beta \lambda^{(1)} + (1-\beta) \lambda^{(2)}) = \left\Vert \beta \sum_{i=1}^n \lambda_i^{(1)} f_{t_i} + (1-\beta) \sum_{i=1}^n \lambda_i^{(2)} f_{t_i} \right\Vert_\alpha \\
&<& \beta \left\Vert \sum_{i=1}^n \lambda_i^{(1)} f_{t_i} \right\Vert_\alpha + (1-\beta) \left\Vert \sum_{i=1}^n \lambda_i^{(2)} f_{t_i} \right\Vert_\alpha = \beta \Psi(\lambda^{(1)}) + (1-\beta) \Psi(\lambda^{(2)})
\end{eqnarray*}
such that $\Psi$ is strictly convex on $\Lambda$.

Consider now the set $B := \{\lambda \in \R^n: \Psi(\lambda) \leq \sigma_{X(t_0)}\}$. $B$ is convex since $\Psi$ is convex. Moreover, it is strictly convex which can be shown as follows. Let $\lambda \in \partial B$, where $\partial B$ denotes the boundary of $B$. Then $\Psi(\lambda) = \sigma_{X(t_0)}$ because otherwise $\Psi(\lambda) < \sigma_{X(t_0)}$, and since $\Psi$ is continuous, there exists an $\varepsilon > 0$ such that $\Psi(x) < \sigma_{X(t_0)}$ for all $x \in B_\lambda(\varepsilon) := \{x \in \R^n: \Vert x -\lambda \Vert_2 \leq \varepsilon\}$. Therefore, $B_\lambda(\varepsilon) \subset B$ which is a contradiction to $\lambda \in \partial B$.

Let $\lambda^{(1)},\lambda^{(2)} \in \partial B$ and $\beta \in (0,1)$. We have already shown that
$$\Psi(\beta \lambda^{(1)} + (1-\beta) \lambda^{(2)}) < \beta \Psi(\lambda^{(1)}) + (1-\beta) \Psi(\lambda^{(2)}) = \sigma_{X(t_0)},$$
so $\beta \lambda^{(1)} + (1-\beta) \lambda^{(2)} \in \mathring{B}$, where $\mathring{B}$ denotes the interior of $B$. Therefore, $B$ is strictly convex.

The strict convexity of $B$ implies that for each $u \in \R^n$, the support set
\begin{equation}
T(B,u) = \{ y \in B: \langle y,u \rangle = h(B,u)\} \label{eq:supportSet}
\end{equation}
consists of a singleton $\{x\}$, cf. the proof of \cite[Theorem 4.5]{Mol09}, where
\begin{equation}
h(B,u) = \sup\{\langle x,u \rangle: x \in B\} \label{eq:supportFunction}
\end{equation}
is the support function of $B$. It is clear that $x \in \partial B$. Recall that the optimization problem (\ref{eq:op}) can be written as
\begin{equation*}
(P) \begin{cases}
	\langle \lambda,a \rangle \to \max \\
	\lambda \in \Lambda
	\end{cases}
\end{equation*}
where
\begin{eqnarray*}
a &:=& ([X(t_1),X(t_0)]_\alpha,\ldots,[X(t_n),X(t_0)]_\alpha)^\mathsf{T}, \\
\Lambda &:=& \{\lambda=(\lambda_1,\ldots,\lambda_n)^\mathsf{T} \in \R^n: \sigma_{\sum_{i=1}^n \lambda_i X(t_i)} = \sigma_{X(t_i)}\}.
\end{eqnarray*}
Consider the optimization problem
\begin{equation*}
(P^*) \begin{cases}
	\langle \lambda,a \rangle \to \max \\
	\lambda \in B
	\end{cases}
\end{equation*}
which has a unique solution $\lambda^*=T(B,a) \in \partial B$ by (\ref{eq:supportSet}) and (\ref{eq:supportFunction}) since $T(B,a)$ consists of only one element. We have already seen that $\partial B \subset \Lambda$ such that $\lambda^*$ is the unique solution of $(P)$, too.

\item The Lagrange function $L:\R^n \times \R \to \R$ of the optimization problem (\ref{eq:op}) is given by
\begin{equation*}
L(\lambda,\gamma) = \sum_{i=1}^n \lambda_i [X(t_i),X(t_0)]_\alpha + \gamma \left(\sigma_{\sum_{i=1}^n \lambda_i X(t_i)} - \sigma_{X(t_0)}\right).
\end{equation*}
By taking partial derivatives and setting them equal to zero, we get the system of equations
\begin{eqnarray*}
[X(t_j),X(t_0)]_\alpha + \gamma \left[X(t_j),\sum_{i=1}^n \lambda_i X(t_i)\right]_\alpha &=& 0, \quad j=1,\ldots,n, \\
\sigma_{\sum_{i=1}^n \lambda_i X(t_i)} &=& \sigma_{X(t_0)}.
\end{eqnarray*}
Suppose that $t_k = t_0$ for some $k \in \{1,\ldots,n\}$. Then $(\lambda^*,\gamma^*)$ with
\begin{eqnarray*}
\gamma^* = -1 \quad \text{and} \quad \lambda_i^* = \begin{cases}
			 1, & i = k, \\
			 0, & i \neq k,
			\end{cases} \quad i=1,\ldots,n,
\end{eqnarray*}
is a solution of the system of equations and therefore a critical point of the objective function. Since the MCL predictor is unique, it is given by $\widehat{X(t_0)} = \sum_{i=1}^n \lambda_i^* X(t_i)$.

\item Let $C$ be an arbitrary set and $g:C \to \R$ be a function. We set
\begin{align*}
&\inf g(C) := \inf\{g(x):x \in C\}, \\
&M(g,C) :=\{x \in C: g(x) = \inf g(C)\}.
\end{align*}
For a set $M \subset \R^n$ and a sequence $(M_l)_{l \in \N}$ of subsets of $\R^n$, we define
\begin{eqnarray}
M=\lim_{l \to \infty}M_l \, \Leftrightarrow \, \lim_{l \to \infty} d(x,M_l) = 0,\, x \in M \text{ and } \liminf_{l \to \infty} d(x,M_l) > 0,\, x \notin M, \label{eq:set_convergence}
\end{eqnarray}
where $d(x,M) := \inf_{m \in M} \Vert x-m\Vert_2$, cf.~\cite[Anhang~1, Bemerkung~1]{Kos91}. Notice that this definition is equivalent to the Hausdorff convergence, see~\cite[Anhang~1, Satz~4]{Kos91}. The following fact provides sufficient conditions for the stability of the solution of an optimization problem, see~\cite[Stabilit\"atssatz~5.4.2]{Kos91}.\vspace*{0.2cm}\\
\textit{
Let $(S_m)_{m \in \N}$ be a sequence of closed convex subsets of $\R^n$, $n \in \N$, with $\lim_{m \to \infty}S_m := S$. Let $(\Phi_m:\R^n \to \R)_{m \in \N}$ be a sequence of convex functions which converges pointwise to some function $\Phi:\R^n \to \R$. If $M(\Phi,S)$ consists of a singleton $\{x\}$, then $x_m \in M(\Phi_m,S_m)$, $m \in \N$, implies $\lim_{m \to \infty} x_m = x$.
}\vspace*{0.2cm}\\
Let $(s_m)_{m \in \N} \subset \R^d$ with $\lim_{m \to \infty} s_m = t_0$. We set
\begin{eqnarray*}
&&S_m := \left\{(\lambda_1,\ldots,\lambda_n)^\mathsf{T} \in \R^n:\int_E \left\vert\sum_{i=1}^n \lambda_i f_{t_i}(x)\right\vert^\alpha m(dx) \leq \int_E \left\vert f_{s_m}(x)\right\vert^\alpha m(dx) \right\}, \\
&&S := \left\{(\lambda_1,\ldots,\lambda_n)^\mathsf{T} \in \R^n:\int_E \left\vert\sum_{i=1}^n \lambda_i f_{t_i}(x)\right\vert^\alpha m(dx) \leq \int_E \left\vert f_{t_0}(x)\right\vert^\alpha m(dx) \right\}, \\
&&\Phi_m(\lambda) := -\sum_{i=1}^n \lambda_i [X(t_i),X(s_m)]_\alpha,\\
&&\Phi(\lambda) := -\sum_{i=1}^n \lambda_i [X(t_i),X(t_0)]_\alpha.\\
\end{eqnarray*}
We notice that we have shown in the second part of the proof that the constraint $\sigma_{\widehat{X(t_0)}} = \sigma_{X(t_0)}$ can be replaced by $\sigma_{\widehat{X(t_0)}} \leq \sigma_{X(t_0)}$. Thus, we can consider $S$ as the set of admissible points of the MCL optimization problem. Furthermore, the previous fact is formulated for minimization problems, but the MCL predictor requires maximization. Therefore, the functions $\Phi_m$ and $\Phi$ are defined with a negative sign to adjust for this fact. Since the covariation function is continuous, the sequence $(\Phi_m)_{m \in \N}$ converges pointwise to $\Phi$. Also, $M(\Phi,S)$ consists of a singleton $\{\lambda(t_0)\}$ since the MCL predictor is unique. As $\lim_{m \to \infty} s_m = t_0$, there exists some $m_0 \in \N$ such that the set $M(\Phi_m,S_m)$ consists of a singleton for all $m \geq m_0$ since the MCL predictor is unique in a neighborhood of $t_0$. Clearly, the sets $S_m$, $m \in \N$, are closed convex subsets of $\R^n$ since the function $\Psi:\R^n \to \R$ defined by
\begin{equation*}
\Psi(\lambda) = \int_E \left\vert\sum_{i=1}^n \lambda_i f_{t_i}(x)\right\vert^\alpha m(dx), \quad \lambda=(\lambda_1,\ldots,\lambda_n)^\mathsf{T} \in \R^n,
\end{equation*}
is continuous and convex. It remains to prove that $\lim_{m \to \infty} S_m = S$.

We need to show that $\lim_{m \to \infty} d(\lambda_0,S_m)=0$ for all $\lambda_0 \in S$ and $\liminf_{m \to \infty} d(\lambda_0,S_m)>0$ for all $\lambda_0 \notin S$ according to (\ref{eq:set_convergence}).

First let $\lambda_0 \in S$. We construct a sequence $(\lambda_m)_{m \in \N}$ with $\lambda_m \in S_m$ such that $\lim_{m \to \infty} \lambda_m = \lambda_0$. Let $\varepsilon > 0$. If $\lambda_0 \in S_m$, then we set $\lambda_m:= \lambda_0$. If $\lambda_0 \notin S_m$, then by the definition of $S_m$, it holds $\Psi(\lambda_0) > 0$. Since $\Psi$ is convex, any local minimum is a global one. Obviously $\min_{\lambda \in \R^n} \Psi(\lambda) = \Psi(0)=0$ and therefore, $\lambda_0$ cannot be a local minimum such that
\begin{equation*}
\Psi_{min}:=\min_{\lambda \in B_\varepsilon(\lambda_0)} \Psi(\lambda) < \Psi(\lambda_0) < \max_{\lambda \in B_\varepsilon(\lambda_0)}=:\Psi_{max},
\end{equation*}
where $B_\varepsilon(\lambda_0):=\{\lambda \in \R^n: \Vert \lambda-\lambda_0\Vert_2\leq \varepsilon\}$. As $\lambda_0 \in S$, we have $\Psi(\lambda_0) \leq \kappa(t_0,t_0)$ which implies $\Psi_{min} < \kappa(t_0,t_0)$.  Since $\kappa$ is continuous and $\lim_{m \to \infty} s_m=t_0$, there exists some $m_1 \in \N$ such that $\Psi_{min} < \kappa(s_m,s_m)$ for all $m \geq m_1$. As $\lambda_0 \notin S_m$, we have $\Psi(\lambda_0) > \kappa(s_m,s_m)$ which implies $\kappa(s_m,s_m) < \Psi_{max}$. We have shown that
\begin{equation*}
\Psi_{min} < \kappa(s_m,s_m) < \Psi_{max} \quad \forall m \geq m_1.
\end{equation*}
We can find some $\lambda_m \in B_\varepsilon(\lambda_0)$ by the intermediate value theorem such that $\Psi(\lambda_m) = \kappa(s_m,s_m)$ since $\Psi$ is continuous, that is $\Vert \lambda_m - \lambda_0\Vert_2 \leq \varepsilon$ for all $m \geq m_1$ and $\lim_{m\to\infty}\lambda_m = \lambda_0$.

Now let $\lambda_0 \notin S$. It holds $\Psi(\lambda_0) > \kappa(t_0,t_0)$. Since $\Psi$ is continuous, there exists some $\delta>0$ such that
$$\Psi(\lambda) > \kappa(t_0,t_0) + \frac{\Psi(\lambda_0)-\kappa(t_0,t_0)}{2}, \quad \forall \lambda \in B_\delta(\lambda_0).$$
Due to the continuity of $\kappa$, there exists some $m_2 \in \N$ such that
$$\kappa(s_m,s_m) \leq \kappa(t_0,t_0) + \frac{\Psi(\lambda_0)-\kappa(t_0,t_0)}{2}, \quad \forall m \geq m_2,$$
which implies for all $\lambda \in B_\delta(\lambda_0)$ and $m \geq m_2$
$$\Psi(\lambda) > \kappa(t_0,t_0) + \frac{\Psi(\lambda_0)-\kappa(t_0,t_0)}{2} \geq \kappa(s_m,s_m).$$
Therefore $B_\delta(\lambda_0) \cap S_m = \emptyset$ for all $m \geq m_2$ and thus $\liminf_{m \to \infty} d(\lambda_0,S_m) \geq \delta > 0$.
\end{enumerate}
\end{proof}






\end{document}